\documentclass[11pt]{amsart}

\usepackage[T1]{fontenc}
\usepackage{lmodern}
\usepackage{microtype}
\usepackage{amsmath,amssymb,amsthm,mathtools}
\usepackage[margin=1in]{geometry}
\usepackage{enumitem}
\usepackage{booktabs}
\usepackage{aliascnt}
\usepackage[colorlinks=true,linkcolor=blue,citecolor=blue,urlcolor=blue]{hyperref}
\usepackage[nameinlink,capitalise]{cleveref}

\newtheorem{theorem}{Theorem}[section]

\newaliascnt{lemma}{theorem}
\newtheorem{lemma}[lemma]{Lemma}
\aliascntresetthe{lemma}
\crefname{lemma}{lemma}{lemmas}
\Crefname{lemma}{Lemma}{Lemmas}

\newaliascnt{proposition}{theorem}
\newtheorem{proposition}[proposition]{Proposition}
\aliascntresetthe{proposition}
\crefname{proposition}{proposition}{propositions}
\Crefname{proposition}{Proposition}{Propositions}

\newaliascnt{corollary}{theorem}
\newtheorem{corollary}[corollary]{Corollary}
\aliascntresetthe{corollary}
\crefname{corollary}{corollary}{corollaries}
\Crefname{corollary}{Corollary}{Corollaries}

\theoremstyle{definition}
\newaliascnt{definition}{theorem}
\newtheorem{definition}[definition]{Definition}
\aliascntresetthe{definition}
\crefname{definition}{definition}{definitions}
\Crefname{definition}{Definition}{Definitions}

\theoremstyle{remark}
\newaliascnt{remark}{theorem}
\newtheorem{remark}[remark]{Remark}
\aliascntresetthe{remark}
\crefname{remark}{remark}{remarks}
\Crefname{remark}{Remark}{Remarks}

\numberwithin{equation}{section}

\newcommand{\N}{\mathbb N}
\newcommand{\rad}{\operatorname{rad}}
\newcommand{\e}{\mathrm e}
\newcommand{\Acal}{\mathcal A}
\newcommand{\Pcal}{\mathcal P}
\newcommand{\Bcal}{\mathcal B}
\newcommand{\mathbfone}{\mathbf 1}
\newcommand{\eps}{\varepsilon}
\newcommand{\AIDisclosure}{%
\medskip
\noindent\textbf{Use of artificial intelligence tools:}
Large language models, primarily OpenAI's ChatGPT, were used extensively
throughout the research. The author originated the ideas and research
directions, while the models contributed substantially to the technical
development of the work: they were used to explore and further develop the
ideas, produce technical lemmas and calculations, generate and debug code,
and assist in auditing the proof. The author critically evaluated outputs
from different model instances, selected and synthesised promising elements,
and discarded or corrected unsuccessful or flawed suggestions. The Lean
formalisation~\cite{Li2026lean} was produced with Harmonic's Aristotle,
directed and audited by the author. All strategic decisions were made by the
author, who takes full responsibility for the mathematical correctness of
the final results.\par
\medskip}

\title[Erd\H{o}s Problem 768]{A Resolution of Erd\H{o}s Problem 768: the Sylow Divisor Condition}

\author[E. Li]{Eric Li}
\dedicatory{\normalfont\normalsize Trinity College, University of Cambridge}
\date{13 July 2026}
\thanks{Email addresses: \href{mailto:el593@cam.ac.uk}{el593@cam.ac.uk}, \href{mailto:contact@ericli.com}{contact@ericli.com}.}

\subjclass[2020]{Primary 11N25; Secondary 11N36, 11N37}
\keywords{Sylow divisor condition, Erd\H{o}s Problem 768, multiplicative large sieve, subset products, divisor moments}

\hypersetup{
  pdftitle={A Resolution of Erd\H{o}s Problem 768: the Sylow Divisor Condition},  pdfauthor={Eric Li},
  pdfsubject={Resolution of Erd\H{o}s Problem 768 and sharp asymptotic for the Sylow divisor condition},
  pdfkeywords={Sylow divisor condition, Erd\H{o}s Problem 768, multiplicative large sieve, subset products, divisor moments}
}

\begin{document}

\begin{abstract}
We resolve Erd\H{o}s Problem~768.  Let $A(x)$ count the positive integers
$n\le x$ such that, for every prime $p\mid n$, there is a divisor $d>1$ of
$n$ with $d\equiv1\pmod p$.  Erd\H{o}s asked whether
\[
 \frac{A(x)}x
 =\exp\!\bigl(-(c+o(1))\sqrt{\log x}\log\log x\bigr)
\]
for some constant $c>0$.  We prove that the limit exists and that
\[
 \lim_{x\to\infty}
 \frac{\log(x/A(x))}{\sqrt{\log x}\log\log x}
 =\frac1{2\sqrt{\log 2}}.
\]
Equivalently, the conjectural asymptotic holds with
$c=1/(2\sqrt{\log 2})$.  The lower bound is obtained from primes in disjoint
logarithmic intervals using a fourth-moment argument based on the
multiplicative large sieve and a subset-product second moment.  The upper
bound uses canonical witness divisors, a deterministic compression map, an
injective reconstruction theorem for its fibers, and growing divisor moments.
Thus the paper determines the exact leading constant in Erd\H{o}s Problem~768. 
The main theorem and its complete proof have been formally verified in the
Lean~4 proof assistant.
\end{abstract}

\maketitle

\section{Introduction}

Erd\H{o}s Problem~768 asks for the density decay of integers satisfying a
simple divisor-witness condition motivated by Sylow's theorems.  Let
$\N=\{1,2,3,\ldots\}$.  An integer $n\in\N$ is said to satisfy the
\emph{Sylow divisor condition} if, for every prime $p\mid n$, there is a
divisor $d\mid n$ such that
\begin{equation}\label{eq:def-condition}
 d>1\qquad\text{and}\qquad d\equiv1\pmod p.
\end{equation}
Let $\Acal$ denote the set of such integers, with $1\in\Acal$ by vacuous
truth, and write
\[
 A(x):=\#\{n\le x:n\in\Acal\}.
\]
Erd\H{o}s asked whether the proportion $A(x)/x$ has the form
\begin{equation}\label{eq:erdos-conjecture-intro}
 \frac{A(x)}x
 =\exp\!\bigl(-(c+o(1))\sqrt{\log x}\log\log x\bigr)
\end{equation}
The main result of this paper resolves
Erd\H{o}s Problem~768 by proving that the limit exists and by identifying
this constant exactly.\footnote{The proof of Theorem~\ref{thm:main} has
been formally verified in the Lean~4 proof assistant; see the end of this
section and \cite{Li2026lean}.}

\begin{theorem}\label{thm:main}
One has
\[
 \lim_{x\to\infty}
 \frac{\log(x/A(x))}{\sqrt{\log x}\log\log x}
 =\frac1{2\sqrt{\log 2}}.
\]
Equivalently,
\[
 A(x)=x\exp\!\left(-\left(\frac1{2\sqrt{\log 2}}+o(1)\right)
 \sqrt{\log x}\log\log x\right)
 \qquad (x\to\infty).
\]
\end{theorem}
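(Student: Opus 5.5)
We split the statement into a lower bound $A(x)\ge x\exp(-(c+o(1))\sqrt{\log x}\log\log x)$ and a matching upper bound, with $c=1/(2\sqrt{\log 2})$. The heuristic fixing $c$ is as follows. The obstruction comes from the \emph{largest} prime factor $p$ of $n$. Writing $n=pm$, a witness $d\equiv1\pmod p$ with $d>1$ must essentially be a divisor of $m$: a divisor $pd'$ is $\equiv0$, so it cannot work. Hence we need $\tau(m)\gtrsim p$ for a random-looking set of residues to hit $1$ mod $p$. Set $p=\exp(u)$ and require $m$ to have about $k=u/\log 2$ distinct prime factors, so that $2^k\approx p$.

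The cost of $\omega(m)\approx k$ among $m\le x/p$ is roughly $(\log\log x)^k/k!$, and forcing these $k$ primes to be small relative to $x$ costs $\exp(-(1+o(1))k\log k)$. On the other side, the density of $n$ whose largest prime factor is at most $\exp(u)$ is $\rho(\log x/u)=\exp(-(1+o(1))(\log x/u)\log(\log x/u))$. Balancing $k\log k\approx (u/\log 2)\log u$ against $(\log x/u)\log\log x$ gives $u\asymp\sqrt{\log 2\,\log x}$. Optimising the constants should produce total exponent $\frac{1}{2\sqrt{\log 2}}\sqrt{\log x}\log\log x$. We would first carry out this optimisation carefully, since it dictates every parameter below.

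\textbf{Lower bound.} We construct $n$ as follows. Choose $k\approx \sqrt{\log x/\log 2}$ disjoint logarithmic intervals $I_j=[y_j,2y_j]$ of small primes, and take $n=q_1\cdots q_k\cdot r$ with $q_j\in I_j$. The cofactor $r$ is $\exp(u)$-smooth, with $u\approx \sqrt{\log 2\log x}$, so every prime $p\mid n$ is at most about $e^u\approx 2^k$. Each large prime $p\mid n$ must be hit by some subset product $\prod_{j\in S}q_j\equiv1\pmod p$; small primes $p$ are handled trivially by divisors of $r$ or by the $q_j$. For a fixed $p$, we count tuples $(q_j)$ for which no subset product is $\equiv1\pmod p$. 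By a second-moment (fourth-moment) computation over characters mod $p$, the number $N_p$ of subsets $S$ with $\prod_{j\in S} q_j\equiv 1$ has mean $2^k/(p-1)$ and variance controlled by $\sum_{\chi\ne\chi_0}\prod_j|1+\chi(q_j)|^2$. Averaging over primes $q_j$ in $I_j$ and summing over $p\le e^u$, the multiplicative large sieve bounds the character sums. The conclusion is that for most tuples every $p$ has $N_p>0$. Counting the admissible $n$ then gives the lower bound with the predicted constant.

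\textbf{Upper bound.} For $n\in\Acal$, let $P=P^+(n)$ and write $n=Pm$. Assign a canonical witness divisor $d\mid m$ with $d\equiv1\pmod P$, for instance the least such $d$. We then bound
\[
A(x)\le \Psi(x,e^{u_0})+\sum_{P>e^{u_0}}\#\{m\le x/P:\exists\, d\mid m,\ d>1,\ d\equiv1\ (P)\},
\]
and estimate the inner count by $\sum_{m\le x/P}\sum_{d\mid m,\,d\equiv1(P)}1$. The naive bound $\sum_m\tau(m)/P$ loses too much, so we raise this to a growing moment. Use $\mathbf 1[\exists d]\le \tau(m)^{s}/P^{s}$-type bounds, or better, $\#\{d\mid m: d\equiv 1\}$ via a compression map that records $m$ by its witness and a reconstruction of the fibre. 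Growing divisor moments $\sum_{m\le y}\tau(m)^{s}\approx y(\log y)^{2^s-1}$ with $s\approx\log k/\log 2$ then recover the $k\log k$ cost.

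\textbf{Main obstacle.} We expect the main obstacle to be the upper bound, specifically making the loss from $\tau(m)\ge P$ sharp to the constant, as opposed to only the order of magnitude. First we would try the direct moment bound $\mathbf 1_{\tau(m)\ge P}\le(\tau(m)/P)^{s}$ with $s$ optimised, checking whether it already produces $1/(2\sqrt{\log 2})$. If it loses a constant, we would refine it by requiring the witness $d$ itself to be $\equiv1\pmod P$, which gives an extra $1/P$ saving per fibre, via the injective reconstruction.
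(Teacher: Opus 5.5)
\textbf{Upper bound.} Here the gap is genuine. You reduce to the witness condition at the single prime $P=P^+(n)$. That discards too much, and no estimate of your two terms can reach the scale $\sqrt{\log x}\log\log x$.

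To see this, note that every $m\le x/P$ divisible by $P+1$ has the witness $d=P+1$. So the primes $P\in(e^{u_0},2e^{u_0}]$ alone contribute $\gg xe^{-u_0}/u_0$ to your sum. On the other side, $\Psi(x,e^{u_0})\ge x\exp(-(\frac12+o(1))\sqrt{\log x\log\log x})$ once $u_0\ge\sqrt{\log x\log\log x}$. Hence, whatever $u_0$ you choose, your right-hand side is at least $x\exp(-O(\sqrt{\log x\log\log x}))$; you are essentially counting Erd\H{o}s's larger set $U$. The same example shows that the witness event is not contained in $\{\tau(m)\ge P\}$.

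The one-prime compression does not rescue this. Its fibres have size at most $\tau(m)(1+\lfloor\log_2x\rfloor)$, but it only guarantees a saving of $\log P^+(n)\ge\log\rad(n)/\omega(n)$. That is merely of order $\sqrt{\log x}$ in the critical range $\omega(n)\asymp\sqrt{\log x}$.

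The missing idea is to exploit witnesses at many primes simultaneously. The paper proceeds as follows.
\begin{enumerate}[label=\textnormal{(\roman*)},leftmargin=2.5em]
\item It runs majority-halving on the incidences $q\mid D_{q_i}$ of the canonical witnesses. This extracts a homogeneous chain of $1+\lfloor\log_2r\rfloor$ primes.
\item It deletes only $h_r\approx\frac{\log r}{2\log 2}$ of them: the zero rows among the first $h_r$, together with $|I_+|$ suffix primes.
\item It proves the record map is injective. Zero-row primes are recovered one by one from $D_{q_i}-1$; the exact suffix primes are recovered jointly by CRT modulo the positive-row primes, using $P<R$.
\end{enumerate}
Consequently the fibres of $n\mapsto n/Q(n)$ have size at most $\tau(m)^{H_t}\exp(o(\sqrt{\log x}\log\log x))$. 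Maximizing $h_r\log p_r$ over $r$ gives $\log Q(n)\ge(\frac1{2\log 2}-o(1))\log\rad(n)\,\frac{\log t}{t}$ with $t=\omega(n)$. With $\lambda=t/\sqrt{\log x}$, this yields the rate $\frac\lambda4+\frac1{4\lambda\log 2}$. Only the better of this rate and the trivial rate $\frac\lambda2$ from $\omega(n)\ge t$ produces $1/(2\sqrt{\log 2})$.

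Deleting only half of the chain is essential. With a $\tau$-moment of order $\log_2k$, as you suggest, the weight $\tau(m)^s\approx k^{\omega(m)}$ cancels the entire saving coming from $\omega(m)\ge k$.

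\textbf{Lower bound.} Your ingredients are the right ones: subset products, a character second moment, and the multiplicative large sieve. The architecture, however, loses the constant.

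You need every prime factor to be at most about $e^u\approx 2^k$. Your $k\approx u/\log 2$ disjoint dyadic intervals $[y_j,2y_j]$ are therefore forced to be essentially $[2^{j-1},2^j]$ for $1\le j\le k$. Then $\prod_j q_j\approx x^{1/2}$. Placing the $q_j$ costs $\sum_j\log\log y_j\approx k\log k$, and the $e^u$-smooth cofactor of size about $x^{1/2}$ costs a further factor about $\exp(-\frac k2\log k)$. The resulting exponent is $\frac{3}{4\sqrt{\log 2}}$, and at best $\frac{1}{\sqrt{2\log 2}}$ after re-optimizing $u$.

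Your heuristic contains the same double count. At the optimum $k=\log x/u$, and the $k\log k$ cost of the $k$ prime factors and the smoothness cost describe one and the same event. Adding them gives $1/\sqrt{\log 2}$, not $1/(2\sqrt{\log 2})$.

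There is a further problem with the large-sieve step. The fourth-moment large sieve bounds the number of bad target primes for a layer of $M_j$ primes only by $\ll k^4e^{2u}/M_j^2$. This is trivial for layers below $e^{u/2}$, so half of your layers cannot be certified Fourier-uniform modulo targets near $e^u$.

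The paper avoids all of this. It uses $r$ layers of logarithmic width $\delta=8\log 2/\log r$, packed just below $e^v=2^{r-1}/r^{10}$, and no cofactor at all:
\begin{enumerate}[label=\textnormal{(\roman*)},leftmargin=2.5em]
\item $n=p_1\cdots p_r$, and the witness for each $p_i$ is a subset product of the other $p_j$;
\item sources and targets live at the same scale, so the exceptional set has size $e^{o(r)}$;
\item the narrow layers cost only $r\log\log r$;
\item the total loss is $r\log r+O(r\log\log r)$ against $\log n=(\log 2+o(1))r^2$, which gives exactly $1/(2\sqrt{\log 2})$.
\end{enumerate}
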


Thus \cref{thm:main} gives a complete answer to Erd\H{o}s Problem~768: the conjectural form \eqref{eq:erdos-conjecture-intro} holds, and the constant is $c=1/(2\sqrt{\log 2})$.

The terminology comes from Sylow's theorems.  If $G$ is a nonabelian finite
simple group and $p^a\Vert |G|$, then the number of Sylow $p$-subgroups
divides $|G|/p^a$, hence also $|G|$; it is congruent to $1$ modulo $p$ and
is not equal to one.  Therefore $|G|\in\Acal$, so $A(x)$ is an elementary
upper bound for the number of possible orders at most $x$ of nonabelian
finite simple groups.

Dornhoff proved that the set of finite simple-group orders has density zero,
and Dornhoff and Spitznagel obtained a quantitative refinement
\cite{Dornhoff1968,DornhoffSpitznagel1968}; see also
\cite{SpitznagelSzygenda1968,HurleyRudvalis1977}.  Erd\H{o}s later introduced
the present set under the name $V$, together with the larger set $U$ in
which the witness condition is imposed only on the largest prime factor
\cite[p.~198]{Erdos1974}.  In that article, equation~(2) is proved, while
the sharper formula~(6) is announced with details suppressed.  The printed
discussion of~(7) is inconsistent: the display is introduced as something
that can be shown, whereas the following sentence says that it had not been
proved.  We do not use~(7).

A short argument of Sawin shows that $\Acal$ has natural density zero by
considering only the largest prime factor \cite{Sawin2021}; the same set is
recorded as OEIS~A352287 \cite{OEISA352287}.  The current Erd\H{o}s Problems
page lists Problem~768 as open while warning that its bibliography may be
incomplete \cite{ErdosProblems768}.  Related random subset-sum questions in
a different full-coverage model have recently been studied by Ma and Tang
\cite{MaTang2026}.

\AIDisclosure

\subsection*{Outline of the proof and the constant}
The proof of Erd\H{o}s Problem~768 consists of independent lower- and upper-bound arguments that meet at the same constant.

\paragraph{The lower bound.}
We select one prime from each of $r$ disjoint logarithmic intervals.  A
fourth-moment argument based on the multiplicative large sieve removes a
negligible collection of exceptional primes and makes every remaining
source interval Fourier-uniform modulo every remaining target prime.  A
subset-product second moment then shows that almost every selected tuple
supplies, for each of its prime factors, a divisor satisfying
\eqref{eq:def-condition}.  The selected integer has
\[
 \log n=(\log 2+o(1))r^2,
\]
while the logarithm of the counting loss is
\[
 r\log r+r\log\log r+O(r).
\]
Consequently,
\[
 r\log r
 =\left(\frac1{2\sqrt{\log 2}}+o(1)\right)
 \sqrt{\log n}\log\log n,
\]
which yields the required constructive lower bound.

\paragraph{The upper bound.}
For each prime factor of $n$, choose a canonical witness divisor.  A repeated
majority-halving procedure extracts a binary homogeneous subsequence of the
prime factors.  From half of that subsequence we construct a squarefree
factor $Q(n)$ and a deterministic compression map $n\mapsto n/Q(n)$.  The
central reconstruction theorem proves that the fibers of this map are
bounded by growing divisor moments.  If $t=\omega(n)$ and
$\lambda=t/\sqrt{\log x}$, two exponential rates result:
\[
 \frac{\lambda}{2}
 \qquad\text{and}\qquad
 \frac{\lambda}{4}+\frac1{4\lambda\log 2}.
\]
Taking the better estimate and optimizing gives
\begin{equation}\label{eq:intro-optimization}
 \inf_{\lambda>0}\max\left\{
 \frac{\lambda}{2},
 \frac{\lambda}{4}+\frac1{4\lambda\log 2}
 \right\}
 =\frac1{2\sqrt{\log 2}},
\end{equation}
with equality at $\lambda=1/\sqrt{\log 2}$.  This matches the lower-bound
constant.

The logical structure may be summarized as
\[
\begin{gathered}
 \text{prime intervals and the large sieve}
 \longrightarrow \text{Fourier-clean prime layers}\\
 \longrightarrow \text{subset products}
 \longrightarrow \text{lower bound},\\[1mm]
 \text{canonical witnesses}
 \longrightarrow \text{compression and reconstruction}\\
 \longrightarrow \text{growing divisor moments}
 \longrightarrow \text{upper bound}.
\end{gathered}
\]
\subsection*{Formal verification}
Theorem~\ref{thm:main} has been formalised and machine-verified in the
Lean~4 proof assistant (toolchain \texttt{v4.28.0}, Mathlib
\texttt{v4.28.0})~\cite{Li2026lean}.  The formal statement,
\texttt{Erdos768.erdos\_768}, matches Theorem~\ref{thm:main} verbatim,
carries no hypotheses, and depends only on Lean's standard axioms
(\texttt{propext}, \texttt{Classical.choice}, \texttt{Quot.sound}), with no
\texttt{sorry} in its dependency graph; the axiom certificate is enforced at
build time and re-verified by continuous integration.  The multiplicative
large sieve (Theorem~\ref{thm:large-sieve}), the subset-product second
moment (Lemma~\ref{lem:subset-product}), and the
compression--reconstruction package (Propositions~\ref{prop:canonical-fiber}
and~\ref{prop:weighted-prefix}) are proved from first principles over
Mathlib; the analytic input (Lemma~\ref{lem:pnt-layer}) is derived from the
\texttt{MediumPNT} theorem of the PrimeNumberTheoremAnd project, with error
exponent $(\log y)^{1/10}$ in place of $\sqrt{\log y}$, which suffices for
the argument.  The formalisation was produced with Harmonic's Aristotle,
directed and audited by the author, and is archived with process metadata
and complete run histories~\cite{Li2026lean}.

\section{Notation and elementary estimates}

All logarithms are natural unless a base is displayed explicitly; in
particular, $\log_2$ denotes the base-two logarithm.  All implied constants
are absolute unless a dependence is indicated.  For $n>1$, let $P^+(n)$
denote the largest prime factor of $n$.  We write $\chi_0$ for the principal
Dirichlet character, $\omega(n)$ for the number of distinct prime factors
of $n$, and $\tau(n)$ for the number of divisors of $n$.  We put
\[
 \rad(n):=\prod_{p\mid n}p,
\]
and write $\mu$ for the M\"obius function and $\mathbfone(n)=1$ for the
constant arithmetic function.  For a positive integer $k$, $d_k(n)$
denotes the ordered $k$-fold divisor function,
\[
 d_k(n)=\#\{(n_1,\ldots,n_k)\in\N^k:n_1\cdots n_k=n\}.
\]
Thus $d_k(p^\nu)=\binom{\nu+k-1}{\nu}$.

We begin with estimates used in the upper bound.
\begin{lemma}[A uniform divisor-sum bound]\label{lem:dk-sum}
For all real $X\ge1$ and integers $k\ge1$,
\[
 \sum_{n\le X}d_k(n)\le X(1+\log X)^{k-1}.
\]
\end{lemma}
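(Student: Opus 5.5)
We argue by induction on $k$, using the convolution identity $d_{k}=d_{k-1}*\mathbfone$. For $k=1$ we have $d_1\equiv1$, and $\sum_{n\le X}1=\lfloor X\rfloor\le X$, which is the claim. For the inductive step we assume the bound for $k-1\ge1$ and all $X\ge1$, and write
\[
 \sum_{n\le X}d_k(n)=\sum_{m\le X}\sum_{a\le X/m}d_{k-1}(a)
 \le \sum_{m\le X}\frac Xm\Bigl(1+\log\frac Xm\Bigr)^{k-2}.
\]
Here we grouped $n=am$ by the last factor $m$. The inductive hypothesis applies because $X/m\ge1$ for every $m\le X$.

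It then suffices to show
\[
 \sum_{m\le X}\frac1m\Bigl(1+\log\frac Xm\Bigr)^{k-2}\le(1+\log X)^{k-1}.
\]
The crude route bounds $(1+\log(X/m))^{k-2}\le(1+\log X)^{k-2}$ and uses $\sum_{m\le X}1/m\le 1+\log X$. The latter holds for all real $X\ge1$ by comparing $1/m\le\int_{m-1}^m dt/t$ for $m\ge2$. Multiplying the two bounds gives exactly $(1+\log X)^{k-1}$, which closes the induction with no loss in the constant.

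The only step needing care is the harmonic-sum bound $\sum_{m\le X}1/m\le1+\log X$ for non-integer $X$. It follows from $\sum_{2\le m\le X}1/m\le\int_1^{X}dt/t=\log X$. The monotonicity step is immediate since $\log(X/m)\le\log X$ for $m\ge1$ and the exponent $k-2\ge0$. So there is no genuine obstacle.
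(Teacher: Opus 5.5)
Your proof is correct and is essentially the paper's argument. The paper counts ordered $k$-tuples with product at most $X$, sums the last coordinate trivially to get $X/(n_1\cdots n_{k-1})$, and bounds the remaining $k-1$ harmonic sums each by $1+\log X$. Your induction is this argument done one coordinate at a time, and your step $\log(X/m)\le\log X$ plays the role of letting each coordinate range up to $X$.
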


\begin{proof}
The left-hand side counts ordered $k$-tuples of positive integers with product at most $X$.  Summing the last coordinate trivially gives
\[
 \sum_{n\le X}d_k(n)
 \le X\sum_{n_1,\ldots,n_{k-1}\le X}\frac1{n_1\cdots n_{k-1}}
 \le X\left(\sum_{n\le X}\frac1n\right)^{k-1},
\]
and $\sum_{n\le X}1/n\le1+\log X$.
\end{proof}

\begin{lemma}[Growing divisor moments]\label{lem:local-moment}
Let $H\ge0$ be an integer, put $J=2^H$, and let $z\ge1$.  If
\[
 K=\lceil zJ\rceil,
\]
then, for every positive integer $n$,
\[
 z^{\omega(n)}\tau(n)^H\le d_K(n).
\]
In particular, $\tau(n)^H\le d_{2^H}(n)$.
\end{lemma}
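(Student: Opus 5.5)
Both sides are multiplicative in $n$: $z^{\omega(n)}$, $\tau(n)^H$ and $d_K(n)$ are all multiplicative. So it suffices to prove the inequality on prime powers, namely that for every prime $p$ and every integer $\nu\ge1$,
\[
 z(\nu+1)^H\le d_K(p^\nu)=\binom{\nu+K-1}{\nu}.
\]
The case $n=1$ is trivial, since both sides equal $1$. Also, $d_K(p^\nu)$ is nondecreasing in $K$, and $K=\lceil zJ\rceil\ge zJ$. It is therefore enough to show $\binom{\nu+K-1}{\nu}\ge z(\nu+1)^H$ using only $K\ge zJ$, $K\ge J$ and $K\ge1$.

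\textbf{Case $\nu=1$.} The target reads $K\ge z\,2^H=zJ$, which is exactly the choice of $K$. This is the tight case.

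\textbf{Case $\nu\ge2$.} Write
\[
 \binom{\nu+K-1}{\nu}=\prod_{i=1}^{\nu}\frac{K-1+i}{i}.
\]
The $i=1$ factor is $K\ge zJ$. It then remains to show
\[
 \prod_{i=2}^{\nu}\frac{K-1+i}{i}\ge\frac{(\nu+1)^H}{2^H}=\Bigl(\frac{\nu+1}{2}\Bigr)^H,
\]
where we may now use only $K\ge J=2^H$. Each factor with $i\ge2$ satisfies $(K-1+i)/i\ge(J+1)/i$, but that bound is too crude. Instead, compare with the case $z=1$: we want $\binom{\nu+J-1}{\nu}\ge J\bigl((\nu+1)/2\bigr)^H$, i.e.
\[
 \binom{\nu+J-1}{\nu}\ge(\nu+1)^H\qquad\text{for }J=2^H,
\]
which is exactly $d_{2^H}(p^\nu)\ge\tau(p^\nu)^H$. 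This inequality follows by induction on $H$ from $d_{2k}=d_k*d_k\ge\ldots$. More concretely, $d_{2J}(p^\nu)=\sum_{a+b=\nu}d_J(p^a)d_J(p^b)\ge\sum_{a+b=\nu}(a+1)^H(b+1)^H$, and we need this to be at least $(\nu+1)^{H+1}$. The last step needs care and is the main obstacle. Since $(a+1)(b+1)\ge\nu+1$ whenever $a+b=\nu$ with $a,b\ge0$, each of the $\nu+1$ terms is at least $(\nu+1)^H$, and the sum is at least $(\nu+1)^{H+1}$. This closes the induction; the base case $H=0$ is $d_1=1=\tau^0$. The ``in particular'' statement is the case $z=1$.

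\textbf{General $z$.} We still have to transfer the factor $z$ for $\nu\ge2$ with $K\ge zJ$. Split
\[
 \binom{\nu+K-1}{\nu}=\frac{K}{\nu}\binom{\nu+K-2}{\nu-1}.
\]
This route is messier, so instead I would use monotonicity in $K$ via a ratio: the function $K\mapsto\binom{\nu+K-1}{\nu}/K=\frac1\nu\binom{\nu+K-1}{\nu-1}\cdot\frac{\nu}{K}\cdots$ is nondecreasing in $K$ for $\nu\ge1$, because it equals $\frac{1}{\nu!}\prod_{i=1}^{\nu-1}(K+i)$. Hence, for $K\ge J$,
\[
 \binom{\nu+K-1}{\nu}\ge\frac{K}{J}\binom{\nu+J-1}{\nu}\ge z(\nu+1)^H,
\]
using $K/J\ge z$ and the $z=1$ inequality just proved. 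This handles all $\nu\ge1$ at once, including $\nu=1$, and multiplicativity finishes the proof.
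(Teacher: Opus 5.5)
Your proof is correct, and the step that differs from the paper is how you obtain the $z=1$ inequality $\binom{\nu+J-1}{\nu}\ge(\nu+1)^H$.

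\begin{itemize}
\item \textbf{Paper's route.} The paper uses an explicit injection. An exponent vector $(e_1,\ldots,e_H)\in\{0,\ldots,\nu\}^H$ is encoded by the multiplicities $m_I$ with which each subset $I\subseteq\{1,\ldots,H\}$ occurs among the level sets $I_j=\{i:e_i\ge j\}$, $1\le j\le\nu$. This gives a weak composition of $\nu$ into $2^H$ parts, and $e_i=\sum_{I\ni i}m_I$ recovers the vector.
\item \textbf{Your route.} You induct on $H$ through $d_{2J}=d_J*d_J$, using $(a+1)(b+1)=ab+\nu+1\ge\nu+1$ whenever $a+b=\nu$.
\end{itemize}

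The passage to general $z$ is the same in both arguments. Your monotonicity of $\binom{\nu+K-1}{\nu}/K=\frac1{\nu!}\prod_{i=1}^{\nu-1}(K+i)$ is the paper's observation that $\prod_{j=0}^{\nu-1}(K+j)/(J+j)\ge K/J$.

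The paper's injection explains the parameter $2^H$ directly, with one part for each subset of the $H$ coordinates, in a single counting step. Your doubling induction is more arithmetic and arguably easier to formalize. It also works without reducing to prime powers, since
\[
d_{2J}(n)=\sum_{ab=n}d_J(a)d_J(b)\ge\sum_{ab=n}(\tau(a)\tau(b))^H\ge\tau(n)^{H+1}
\]
by $\tau(ab)\le\tau(a)\tau(b)$.

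In a write-up, keep only the induction and the final monotonicity paragraph, which together cover every $\nu\ge1$. The separate $\nu=1$ case, the abandoned factor-by-factor bound in the ``Case $\nu\ge2$'' discussion, and the garbled first expression for the ratio can all be removed.
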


\begin{proof}
It is enough to prove the assertion at a prime power $p^\nu$.  The map
\[
 (e_1,\ldots,e_H)\in\{0,1,\ldots,\nu\}^H
\]
can be encoded by the multiplicities of the $2^H$ subsets
\[
 I_j:=\{i:e_i\ge j\},\qquad 1\le j\le\nu.
\]
If $m_I$ is the multiplicity with which the subset $I\subseteq\{1,\ldots,H\}$ occurs among the $I_j$, then the original vector is recovered from
\[
 e_i=\sum_{I\ni i}m_I.
\]
Thus this is an injection into weak compositions of $\nu$ into $J=2^H$ parts, and therefore
\[
 (\nu+1)^H\le\binom{\nu+J-1}{\nu}.
\]
For $\nu\ge1$,
\[
 \frac{\binom{\nu+K-1}{\nu}}{\binom{\nu+J-1}{\nu}}
 =\prod_{j=0}^{\nu-1}\frac{K+j}{J+j}
 \ge\frac KJ\ge z,
\]
because the first factor is $K/J$ and all remaining factors are at least one.  Hence
\[
 z(\nu+1)^H\le\binom{\nu+K-1}{\nu}.
\]
Multiplication over the prime powers dividing $n$ proves the result.
\end{proof}

\begin{lemma}[A restricted growing-moment estimate]\label{lem:restricted-moment}
Let $X\ge3$, let $H\ge0$ be an integer, put $J=2^H$, and let $u>0$ be
real.  Write
\[
 \ell_X:=\log(1+\log X),\qquad B:=J\ell_X.
\]
If $u>B$, then
\[
 \sum_{\substack{m\le X\\ \omega(m)\ge u}}\tau(m)^H
 \le X\exp\!\left(-u\log\frac uB+u\right).
\]
\end{lemma}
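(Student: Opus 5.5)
The plan is Rankin's trick in the variable $\omega(m)$, combined with \cref{lem:local-moment} and \cref{lem:dk-sum}. Fix a parameter $z\ge1$, to be chosen later. For every $m$ with $\omega(m)\ge u$ we have $z^{\omega(m)}\ge z^u$. Hence
\[
 \sum_{\substack{m\le X\\ \omega(m)\ge u}}\tau(m)^H
 \le z^{-u}\sum_{m\le X}z^{\omega(m)}\tau(m)^H .
\]

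Next we apply \cref{lem:local-moment} with $K=\lceil zJ\rceil$, which gives $z^{\omega(m)}\tau(m)^H\le d_K(m)$. Then \cref{lem:dk-sum} bounds the remaining sum by $X(1+\log X)^{K-1}$. Since $K-1<zJ$ and $1+\log X\ge1$, this is at most $X\exp(zJ\ell_X)=X\exp(zB)$. Altogether,
\[
 \sum_{\substack{m\le X\\ \omega(m)\ge u}}\tau(m)^H\le X\exp\bigl(-u\log z+zB\bigr).
\]

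Finally we optimize over $z$. The natural choice is $z=u/B$, which is admissible because the hypothesis $u>B$ gives $z>1$. With this choice the exponent becomes $-u\log(u/B)+u$, which is exactly the claimed bound.

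The only point needing care is the rounding in $K=\lceil zJ\rceil$, since $z$ need not make $zJ$ an integer. It is absorbed by the inequality $K-1<zJ$ together with $1+\log X\ge1$, so no loss occurs. The assumption $X\ge3$ only ensures $\ell_X>0$, so that $B>0$ and $u/B$ makes sense. I do not expect any genuine obstacle; the main thing to check is that $z\ge1$ is required in \cref{lem:local-moment}, and this is precisely what $u>B$ guarantees.
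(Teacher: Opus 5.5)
Your proposal is correct and follows essentially the same route as the paper. The paper also applies Rankin's trick in $\omega(m)$, uses \cref{lem:local-moment} with $K=\lceil zJ\rceil$ and then \cref{lem:dk-sum}, absorbs the rounding via $\lceil zJ\rceil-1\le zJ$, and chooses $z=u/B>1$.
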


\begin{proof}
For $z\ge1$, \cref{lem:local-moment} and \cref{lem:dk-sum} give
\[
 \sum_{\substack{m\le X\\ \omega(m)\ge u}}\tau(m)^H
 \le z^{-u}X(1+\log X)^{\lceil zJ\rceil-1};
\]
indeed, $\omega(m)\ge u$ implies $z^{\omega(m)}\ge z^u$.  Choose
$z=u/B>1$.  Since $\lceil zJ\rceil-1\le zJ=u/\ell_X$, the logarithm of
the factor after $X$ is at most
\[
 -u\log\frac uB+\frac u{\ell_X}\ell_X
 =-u\log\frac uB+u.
\]
\end{proof}

Taking $H=0$ yields the following convenient large-$\omega$ estimate.

\begin{corollary}\label{cor:omega-tail}
Let $X\ge3$ and $u>\ell_X$.  Then
\[
 \#\{m\le X:\omega(m)\ge u\}
 \le X\exp\!\left(-u\log\frac u{\ell_X}+u\right).
\]
\end{corollary}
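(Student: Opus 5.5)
This is the case $H=0$ of \cref{lem:restricted-moment}, so the plan is a direct specialization. With $H=0$ we have $J=2^0=1$ and $B=J\ell_X=\ell_X$. Also $\tau(m)^0=1$ for every $m$, so the left-hand side of \cref{lem:restricted-moment} is exactly
\[
 \#\{m\le X:\omega(m)\ge u\}.
\]
The hypotheses $X\ge3$ and $u>B=\ell_X$ are precisely those assumed in the corollary. Therefore \cref{lem:restricted-moment} gives the bound $X\exp(-u\log(u/\ell_X)+u)$ immediately.

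As a sanity check I would retrace the underlying argument. Take $z=u/\ell_X>1$. By \cref{lem:local-moment} with $H=0$ we get $z^{\omega(m)}\le d_{\lceil z\rceil}(m)$. Summing with \cref{lem:dk-sum} gives the bound
\[
 z^{-u}X(1+\log X)^{\lceil z\rceil-1}\le X\exp\!\bigl(-u\log z+z\ell_X\bigr),
\]
and $z\ell_X=u$.

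There is no real obstacle here. The only points to confirm are that $\lceil z\rceil-1\le z$, and that $\ell_X>0$ for $X\ge3$, so that $z$ is well defined.
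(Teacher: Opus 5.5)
Your proposal is correct and is exactly the paper's route: the corollary is the specialization $H=0$ of \cref{lem:restricted-moment}, where $J=1$, $B=\ell_X$ and $\tau(m)^0=1$. Your retrace of the underlying argument, with $z=u/\ell_X$, $\lceil z\rceil-1\le z$ and $\ell_X>0$, also matches the lemma's proof.
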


We shall also remove integers whose radical is much smaller than the integer itself.

\begin{lemma}[Radical defect]\label{lem:radical-defect}
There is an absolute constant $C$ such that
\[
 \sum_{n\le X}\left(\frac n{\rad(n)}\right)^{1/2}\le CX
 \qquad(X\ge1).
\]
Consequently, for every $Y\ge1$,
\[
 \#\left\{n\le X:\frac n{\rad(n)}>Y\right\}\le CXY^{-1/2}.
\]
\end{lemma}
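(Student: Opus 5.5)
The plan is to write the weight $f(n)=(n/\rad(n))^{1/2}$ as a Dirichlet convolution $f=\mathbfone*g$ and bound $\sum_{n\le X}f(n)\le X\sum_{d}|g(d)|/d$, so that everything reduces to showing that the series $\sum_d|g(d)|/d$ converges.

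First, $f$ is multiplicative, with $f(p^\nu)=p^{(\nu-1)/2}$ for $\nu\ge1$. In particular $f(p)=1$, so $g=\mu*f$ is multiplicative with
\[
 g(p)=f(p)-1=0,\qquad g(p^\nu)=f(p^\nu)-f(p^{\nu-1})\quad(\nu\ge2).
\]
Hence $g$ is supported on squarefull integers. For $\nu\ge2$ we have $|g(p^\nu)|\le p^{(\nu-1)/2}$.

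Next, from $f=\mathbfone*g$ we get
\[
 \sum_{n\le X}f(n)=\sum_{d\le X}g(d)\left\lfloor\frac Xd\right\rfloor\le X\sum_{d}\frac{|g(d)|}d
 =X\prod_p\Bigl(1+\sum_{\nu\ge2}\frac{|g(p^\nu)|}{p^\nu}\Bigr).
\]
Each local factor is at most
\[
 1+\sum_{\nu\ge2}p^{(\nu-1)/2-\nu}=1+p^{-1/2}\sum_{\nu\ge2}p^{-\nu/2}=1+\frac{p^{-3/2}}{1-p^{-1/2}}.
\]
This is $1+O(p^{-3/2})$, and the factor $(1-p^{-1/2})^{-1}$ is bounded uniformly since $p\ge2$. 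The product over $p$ therefore converges, and we may take $C$ to be its value.

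The counting consequence follows from Markov's inequality. Each $n$ with $n/\rad(n)>Y$ contributes more than $Y^{1/2}$ to the sum, so the number of such $n$ is at most $CXY^{-1/2}$.

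No step here is a real obstacle. The one point to check is that the exponent $1/2$ makes $\sum_{\nu\ge2}p^{(\nu-1)/2-\nu}$ of order $p^{-3/2}$, which is summable over primes; any exponent strictly below $1$ would work the same way.
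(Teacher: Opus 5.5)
Your proposal is correct and follows essentially the same route as the paper: write $f=\mathbfone*g$ with $g=\mu*f$ supported on squarefull integers, bound $\sum_{n\le X}f(n)\le X\sum_d g(d)/d$, and check that the local factors are $1+O(p^{-3/2})$, then finish with Markov's inequality. The only cosmetic difference is that you bound $|g(p^\nu)|$ and evaluate the Euler product explicitly, whereas the paper notes $g\ge0$ and sums $g(p^\nu)/p^\nu$ directly.
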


\begin{proof}
Let
\[
 f(n)=\left(\frac n{\rad(n)}\right)^{1/2}.
\]
This is multiplicative, with $f(p^0)=f(p)=1$ and
$f(p^\nu)=p^{(\nu-1)/2}$ for $\nu\ge2$.  Put $g=f*\mu$, where $\mu$ is the M\"obius function.  Then $g$ is nonnegative and multiplicative, $g(p)=0$, and
\[
 g(p^\nu)=p^{(\nu-1)/2}-p^{(\nu-2)/2}\qquad(\nu\ge2).
\]
Moreover,
\[
 \sum_{\nu\ge2}\frac{g(p^\nu)}{p^\nu}\ll p^{-3/2},
\]
and therefore
\[
 \sum_p\sum_{\nu\ge2}\frac{g(p^\nu)}{p^\nu}<\infty.
\]
Since $f=\mathbfone*g$,
\[
 \sum_{n\le X}f(n)
 =\sum_{d\le X}g(d)\left\lfloor\frac Xd\right\rfloor
 \le X\sum_{d\ge1}\frac{g(d)}d\ll X.
\]
The final assertion follows from Markov's inequality.
\end{proof}

For the lower bound we use the standard multiplicative large sieve.  We state precisely the form needed here.

\begin{theorem}[Multiplicative large sieve]\label{thm:large-sieve}
For complex numbers $a_n$, and $N,Q\ge1$,
\[
 \sum_{q\le Q}\frac q{\varphi(q)}
 \sum_{\chi\, (\mathrm{mod}\,q)}^{*}
 \left|\sum_{n\le N}a_n\chi(n)\right|^2
 \ll (N+Q^2)\sum_{n\le N}|a_n|^2.
\]
Here the star denotes primitive Dirichlet characters, extended by zero on integers not coprime to the modulus.
\end{theorem}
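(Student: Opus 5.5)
The plan is the standard duality argument, which reduces the bound to an additive large sieve inequality at Farey fractions. By duality, the statement is equivalent to the following: for every family of complex numbers $b_{q,\chi}$ indexed by $q\le Q$ and primitive $\chi$ modulo $q$,
\[
 \sum_{n\le N}\Bigl|\sum_{q\le Q}\sideset{}{^*}\sum_{\chi\,(\mathrm{mod}\,q)}\Bigl(\frac q{\varphi(q)}\Bigr)^{1/2}b_{q,\chi}\chi(n)\Bigr|^2\ll(N+Q^2)\sum_{q,\chi}|b_{q,\chi}|^2 .
\]
Rather than prove this dual form directly, I would run the classical route through the additive large sieve.

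The first step is to establish the additive inequality. Let $S(\alpha)=\sum_{n\le N}a_n e(n\alpha)$ and let $\alpha_1,\dots,\alpha_R$ be $\delta$-spaced modulo $1$. The claim is
\[
 \sum_r|S(\alpha_r)|^2\le (N+\delta^{-1})\sum|a_n|^2,
\]
or at least the same bound with an absolute constant factor. I would prove it by Gallagher's lemma:
\[
 |S(\alpha_r)|^2\le \delta^{-1}\int_{|t|\le\delta/2}|S^2(\alpha_r+t)|\,dt+\int_{|t|\le\delta/2}|(S^2)'(\alpha_r+t)|\,dt .
\]
Summing over $r$, the intervals are disjoint modulo $1$, so the right-hand side is bounded by
\[
 \delta^{-1}\int_0^1|S|^2+2\int_0^1|S||S'| .
\]
Parseval and Cauchy--Schwarz then bound this by $(\delta^{-1}+2\pi N)\sum|a_n|^2$ after shifting the indices to be centred; the constant is absorbed into $\ll$. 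Applying this to the Farey fractions $a/q$ with $q\le Q$ and $(a,q)=1$, which are $Q^{-2}$-spaced, gives
\[
 \sum_{q\le Q}\ \sideset{}{^*}\sum_{a\,(\mathrm{mod}\,q)}|S(a/q)|^2\ll (N+Q^2)\sum|a_n|^2 ,
\]
where the star now denotes reduced residues.

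The second step transfers this to characters via Gauss sums. For primitive $\chi$ modulo $q$, the Gauss sum $\tau(\bar\chi)$ satisfies $|\tau(\bar\chi)|^2=q$. Moreover,
\[
 \bar\chi(a)\tau(\chi)\cdot\chi(n)\ \text{-type identity:}\qquad \chi(n)\tau(\bar\chi)=\sum_{a\,(\mathrm{mod}\,q)}\bar\chi(a)e(an/q)
\]
holds for \emph{all} $n$, including those with $(n,q)>1$; this is exactly where primitivity is used. Hence
\[
 \tau(\bar\chi)\sum_n a_n\chi(n)=\sum_{(a,q)=1}\bar\chi(a)S(a/q).
\]
Summing $|\cdot|^2$ over all characters modulo $q$ and using orthogonality gives
\[
 q\ \sideset{}{^*}\sum_{\chi}\Bigl|\sum_n a_n\chi(n)\Bigr|^2\le \varphi(q)\ \sideset{}{^*}\sum_{a}|S(a/q)|^2 ,
\]
where the star on the left denotes primitive characters and on the right reduced residues; the inequality comes from dropping the non-primitive characters on the left. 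Dividing by $\varphi(q)$ and summing over $q\le Q$ yields the theorem from the first step.

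The main obstacle is the Gauss-sum identity for $n$ not coprime to $q$. This is the one place where primitivity matters: for such $n$ the right-hand side vanishes exactly because $\chi$ is primitive. I would verify it by writing $d=(n,q)$ and using the fact that $\chi$ is not induced from modulus $q/d$, so the sum over $a$ in a coset of the kernel of reduction modulo $q/d$ vanishes. Everything else is routine. Since the result is standard (Montgomery, Gallagher), in the paper one could alternatively just cite it.
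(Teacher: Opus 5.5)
Your proof is correct. It follows the standard route: the additive large sieve at the $Q^{-2}$-spaced Farey fractions, transferred to primitive characters by the Gauss-sum identity (valid for every $n$ precisely because $\chi$ is primitive) and orthogonality. The paper gives no proof of its own; it simply cites Bombieri--Davenport and Iwaniec--Kowalski, Section~7.4, where this same derivation from the additive sieve appears. Using Gallagher's lemma instead of a sharp-constant additive inequality costs only an absolute factor, which the $\ll$ in the statement absorbs.
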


This is the Bombieri--Davenport form of the multiplicative large sieve \cite{BombieriDavenport1969}; see also \cite[Section 7.4]{IwaniecKowalski2004}.  The implied constant in the displayed inequality is absolute.

We also record a short logarithmic-interval consequence of the prime number theorem.

\begin{lemma}[Primes in logarithmic intervals]\label{lem:pnt-layer}
Suppose $u\to\infty$ and $u^{-2}\le\delta\le1$.  Uniformly in this range,
\[
 \#\{p\text{ prime}:\e^{u-\delta}<p\le\e^u\}
 =(1+o(1))\frac{(1-\e^{-\delta})\e^u}{u}.
\]
\end{lemma}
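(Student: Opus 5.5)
We derive the statement from the prime number theorem in the form $\pi(y)=\mathrm{li}(y)+O\bigl(y\exp(-c\sqrt{\log y})\bigr)$. Any error term of size $y\exp(-c(\log y)^{\kappa})$ with fixed $\kappa>0$ would suffice, which is why the formalisation can use the exponent $1/10$. Put $y=\e^u$ and $y'=\e^{u-\delta}$, so that $u-1\le\log y'\le u$. The count we want is $\pi(y)-\pi(y')$, and we split it as
\[
 \pi(y)-\pi(y')=\int_{y'}^{y}\frac{dt}{\log t}+O\bigl(\e^u\exp(-c'\sqrt u)\bigr).
\]
Here the error at $y'$ is also $O(\e^u\exp(-c'\sqrt{u}))$, because $y'\le y$ and $\log y'\ge u-1$.

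For the main term, substitute $t=\e^s$:
\[
 \int_{u-\delta}^{u}\frac{\e^s}{s}\,ds .
\]
For $s\in[u-\delta,u]$ we have $1/s=(1+O(1/u))/u$, since $\delta\le1$. Hence the integral equals
\[
 \bigl(1+O(1/u)\bigr)\,\frac{\e^u-\e^{u-\delta}}{u}=\bigl(1+O(1/u)\bigr)\,\frac{(1-\e^{-\delta})\e^u}{u}.
\]

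The main step is to check that the PNT error is $o$ of this main term uniformly in $\delta\ge u^{-2}$. Since $1-\e^{-\delta}\ge\delta/2$ for $0<\delta\le1$, the main term is at least
\[
 \frac{\delta\,\e^u}{2u}\ge\frac{\e^u}{2u^{3}}.
\]
The ratio of error to main term is therefore
\[
 O\bigl(u^{3}\exp(-c'\sqrt u)\bigr)=o(1),
\]
and this bound does not depend on $\delta$ in the allowed range. The same computation with $\exp(-c(\log y)^{1/10})$ in place of $\exp(-c\sqrt{\log y})$ still gives $u^3\exp(-cu^{1/10})\to0$, so the weaker PNT input is enough. Combining the two estimates gives the claimed asymptotic uniformly for $u^{-2}\le\delta\le1$.
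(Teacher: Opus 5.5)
Your proof is correct and is essentially the paper's argument: you subtract the prime number theorem with the classical error term at $\e^u$ and $\e^{u-\delta}$, evaluate the $\operatorname{li}$-difference as $(1+O(1/u))(1-\e^{-\delta})\e^u/u$, and absorb the $O(\e^u\e^{-c\sqrt u})$ error using $1-\e^{-\delta}\ge\delta/2\ge u^{-2}/2$. You also make explicit two steps the paper leaves implicit, namely the error at the lower endpoint and the lower bound for $1-\e^{-\delta}$, and your remark on the weaker $(\log y)^{1/10}$ error term matches the paper's comment on the formalisation.
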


\begin{proof}
The prime number theorem with its classical zero-free-region error gives
\[
 \pi(y)=\operatorname{li}(y)+O\bigl(y\e^{-c\sqrt{\log y}}\bigr)
\]
for some absolute $c>0$; see, for example, \cite[Chapter 6]{MontgomeryVaughan2007}.  Subtracting the formula at $y=\e^u$ and $y=\e^{u-\delta}$, the main term is
\[
 \int_{\e^{u-\delta}}^{\e^u}\frac{dt}{\log t}
 =\left(1+O\!\left(\frac1u\right)\right)
 \frac{(1-\e^{-\delta})\e^u}{u},
\]
uniformly for $u^{-2}\le\delta\le1$.
The error is $O(\e^u\e^{-c\sqrt u})$, which is $o(\delta\e^u/u)$ uniformly for $\delta\ge u^{-2}$.
\end{proof}

\section{A subset-product lemma}

The construction in the lower bound rests on the following Fourier lemma;
the random variables need not have identical distributions.  Let $G$ be
a finite abelian group, written multiplicatively, and let $\widehat G$ be
its character group.  For a
$G$-valued random variable $X$, write
\[
 \widehat\nu_X(\chi):=\mathbb E\chi(X).
\]

\begin{lemma}[Subset products hit the identity]\label{lem:subset-product}
Let $X_1,\ldots,X_m$ be independent $G$-valued random variables.  Suppose
\[
 \max_{1\le j\le m}\max_{\chi\ne1}
 |\widehat\nu_{X_j}(\chi)|\le\rho,
 \qquad m\rho\le1.
\]
Put
\[
 \Lambda:=\frac{2^m}{|G|}.
\]
There are absolute constants $\Lambda_0,C_1>0$ such that, whenever
$\Lambda\ge\Lambda_0$,
\[
 \mathbb P\left(
 \prod_{j\in J}X_j\ne1_G
 \text{ for every nonempty }J\subseteq\{1,\ldots,m\}
 \right)
 \le\frac{C_1}{\Lambda}.
\]
\end{lemma}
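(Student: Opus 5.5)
The plan is a second-moment argument, with all probabilities expanded by orthogonality of characters. Let $N'$ be the number of subsets $J\subseteq\{1,\ldots,m\}$, the empty set included, with $\prod_{j\in J}X_j=1_G$. Then $N'\ge1$ always, and the event in the statement is exactly $\{N'=1\}$. Write $\nu_j:=\widehat\nu_{X_j}$. I will show that $\mathbb E N'=\Lambda+O(1)$ and $\mathbb E N'^2=\Lambda^2+O(\Lambda)+O(1)$. Chebyshev's inequality then gives
\[
 \mathbb P(N'\le1)\le\frac{\operatorname{Var}N'}{(\mathbb E N'-1)^2}\ll\frac{\Lambda}{\Lambda^2}.
\]
Here $\Lambda_0$ is chosen so that $\mathbb E N'-1\ge\Lambda/2$.

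\emph{First moment.} Orthogonality gives $\mathbf 1[g=1_G]=|G|^{-1}\sum_{\chi}\chi(g)$. Independence then lets the sum over all $J$ factor coordinatewise:
\[
 \mathbb E N'=\frac1{|G|}\sum_{\chi\in\widehat G}\prod_{j=1}^m\bigl(1+\nu_j(\chi)\bigr).
\]
The trivial character contributes $2^m/|G|=\Lambda$. Each nontrivial $\chi$ contributes at most $(1+\rho)^m\le\e^{m\rho}\le\e$ in absolute value, using $m\rho\le1$. There are fewer than $|G|$ such characters, so $|\mathbb E N'-\Lambda|\le\e$.

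\emph{Second moment.} Apply the same expansion on $G\times G$:
\[
 \mathbb E N'^2=\frac1{|G|^2}\sum_{\chi,\psi}\sum_{J,K}\mathbb E\prod_{j\in J}\chi(X_j)\prod_{k\in K}\psi(X_k).
\]
Index $j$ lies in exactly one of $J\setminus K$, $K\setminus J$, $J\cap K$, or neither, and contributes $\nu_j(\chi)$, $\nu_j(\psi)$, $\nu_j(\chi\psi)$, or $1$ respectively. So the sum over pairs $(J,K)$ factors as
\[
 \prod_{j=1}^m\bigl(1+\nu_j(\chi)+\nu_j(\psi)+\nu_j(\chi\psi)\bigr).
\]
I then split the pairs $(\chi,\psi)$ into cases.
\begin{itemize}
\item $\chi=\psi=1$: the product is $4^m$, giving $\Lambda^2$ after dividing by $|G|^2$.
\item Exactly one of $\chi$, $\psi$, $\chi\psi$ is trivial (three families, each with at most $|G|$ pairs): each factor has the form $2+2\nu_j(\cdot)$ or $2+\nu_j(\chi)+\nu_j(\chi^{-1})$. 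Its modulus is at most $2(1+\rho)$, so the product is at most $2^m\e$. These families contribute at most $3\e\Lambda$ in total.
\item None of $\chi$, $\psi$, $\chi\psi$ is trivial (at most $|G|^2$ pairs): each factor is at most $1+3\rho$, so the product is at most $\e^3$, and the total is at most $\e^3$.
\end{itemize}
Hence $\mathbb E N'^2\le\Lambda^2+3\e\Lambda+\e^3$. Combined with $\mathbb E N'\ge\Lambda-\e$, this gives $\operatorname{Var}N'\le(3\e+2\e)\Lambda+\e^3$.

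The main obstacle is the second moment, because the events for overlapping pairs $J,K$ are correlated. The point that makes it work is the four-way coordinatewise factorization above. It lets the non-identically distributed variables be handled one coordinate at a time, using only the uniform bound $\rho$, with no need to control intersection sizes. One must also check that every term except $\chi=\psi=1$ is $O(\Lambda)$ rather than of order $\Lambda^2$. This holds because in every other case at least one of $\chi$, $\psi$, $\chi\psi$ is nontrivial, so each factor has modulus at most $2+2\rho$. Finally, taking $\Lambda_0$ large (say $\Lambda_0=4\e+4$) gives $\mathbb P(N'=1)\le C_1/\Lambda$.
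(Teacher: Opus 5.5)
Your proposal is correct and follows the paper's proof essentially step for step. Both use the same count of identity subset products including the empty set, the same orthogonality expansion with the four-term coordinatewise factorization for the second moment, the same split of character pairs (your three ``exactly one trivial'' families are the paper's classes (ii) and (iii)), and Chebyshev's inequality. The one point you leave implicit is that exactly two of $\chi,\psi,\chi\psi$ can never be trivial; this is what makes your case split exhaustive and justifies the bound $2+2\rho$ on every factor outside $\chi=\psi=1$.
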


\begin{proof}
Let $Z_0$ count all subsets, including the empty subset, whose product is
the identity.  Thus $Z_0$ is a real nonnegative integer.  Character
orthogonality gives
\[
 Z_0=\frac1{|G|}\sum_{\chi\in\widehat G}
 \prod_{j=1}^m(1+\chi(X_j)).
\]
The principal character contributes $\Lambda$.  For every nonprincipal
character,
\[
 \left|\prod_{j=1}^m(1+\widehat\nu_{X_j}(\chi))\right|
 \le(1+\rho)^m\le\e,
\]
so
\begin{equation}\label{eq:EZ}
 \mathbb EZ_0=\Lambda+O(1).
\end{equation}

For the second moment,
\[
 \mathbb EZ_0^2
 =\frac1{|G|^2}\sum_{\chi,\psi\in\widehat G}
 \prod_{j=1}^m
 \left(1+\widehat\nu_{X_j}(\chi)
 +\widehat\nu_{X_j}(\psi)
 +\widehat\nu_{X_j}(\chi\psi)\right).
\]
We use four disjoint classes of pairs.
\begin{enumerate}[label=\textnormal{(\roman*)},leftmargin=2.5em]
\item The single pair $(1,1)$ contributes $\Lambda^2$.
\item There are $2(|G|-1)$ pairs for which exactly one of $\chi,\psi$
 is principal.  For each such pair the absolute value of the product is $O(2^m)$, so their
 normalized total contribution is $O(\Lambda)$.
\item There are $|G|-1$ pairs with $\chi\psi=1$ and both characters
 nonprincipal.  For each such pair,
 \[
  \left|\prod_{j=1}^m\left(2+\widehat\nu_{X_j}(\chi)
  +\widehat\nu_{X_j}(\chi^{-1})\right)\right|=O(2^m),
 \]
 and their normalized total contribution is again $O(\Lambda)$.
\item For every remaining pair, $\chi,\psi$, and $\chi\psi$ are all
 nonprincipal.  The corresponding product is at most
 $(1+3\rho)^m\le\e^3$ in absolute value, and all such pairs together
 contribute $O(1)$ after division by $|G|^2$.
\end{enumerate}
Hence
\[
 \mathbb EZ_0^2\le\Lambda^2+O(\Lambda+1).
\]
By \eqref{eq:EZ},
$(\mathbb EZ_0)^2\ge\Lambda^2-O(\Lambda+1)$, and therefore
\[
 \operatorname{Var}(Z_0)\ll\Lambda+1.
\]
If no nonempty subset product equals the identity, then $Z_0=1$.
For $\Lambda\ge\Lambda_0$ with $\Lambda_0$ sufficiently large,
$|1-\mathbb EZ_0|\ge\Lambda/2$.  Chebyshev's inequality now gives
\[
 \mathbb P(Z_0=1)\ll\frac1\Lambda,
\]
which proves the assertion after enlarging $C_1$.
\end{proof}

\section{The constructive lower bound}

Put
\[
 \alpha:=\log 2.
\]
For a large positive integer $r$, define
\begin{equation}\label{eq:lower-parameters}
 v:=\alpha(r-1)-10\log r,
 \qquad
 \delta:=\frac{8\alpha}{\log r},
 \qquad
 u_j:=v-(r-j)\delta\quad(1\le j\le r).
\end{equation}
\begin{remark}\label{rem:lower-constants}
The numerical constants $10$ and $8$ are convenient safety margins, not
optimized choices.  The term $10\log r$ ensures
$2^{r-1}/p_i\ge r^{10}$, so the union of the subset-product failure
probabilities is summable.  The choice $\delta\asymp1/\log r$ keeps the
layers wide enough for prime counting while making the accumulated
cleaning and interval-width losses lower order.
\end{remark}
Let
\begin{equation}\label{eq:prime-layers}
 \Pcal_j:=\{p\text{ prime}:\e^{u_j-\delta}<p\le\e^{u_j}\},
 \qquad M_j:=|\Pcal_j|.
\end{equation}
The intervals are pairwise disjoint.  Uniformly for $1\le j\le r$,
\[
 u_j=\alpha r+O\!\left(\frac r{\log r}+\log r\right),
\]
so $u_1\to\infty$ and, for all sufficiently large $r$,
$u_j^{-2}\le\delta\le1$.  Hence \cref{lem:pnt-layer} applies to every
layer with one uniform error term, and
\begin{equation}\label{eq:Mj-asymp}
 M_j=(1+o(1))\frac{(1-\e^{-\delta})\e^{u_j}}{u_j}.
\end{equation}
In particular,
\begin{equation}\label{eq:Mj-exponential}
 \min_j\log M_j=\alpha r-o(r).
\end{equation}

We next remove a negligible collection of target primes for which one of the layers has a large Fourier coefficient.

\begin{lemma}[Fourth-moment cleaning]\label{lem:fourth-moment}
Fix $j$.  For a prime $p\le\e^v$ and a nonprincipal character $\chi\pmod p$, put
\[
 S_{j,p}(\chi):=\sum_{q\in\Pcal_j}\chi(q).
\]
Let $\Bcal_j$ be the set of primes $p\le\e^v$ for which
\[
 |S_{j,p}(\chi)|>\frac{M_j}{20r}
\]
for at least one nonprincipal $\chi\pmod p$.  Then
\[
 |\Bcal_j|=\exp(o(r))
\]
uniformly in $j$.
\end{lemma}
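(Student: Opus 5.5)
We plan to bound $|\Bcal_j|$ by a fourth-moment count using \cref{thm:large-sieve}, applied to the \emph{square} of the prime sum. The point is that the large sieve is only efficient when the length $N$ of the sequence is at least about $Q^2$. Here the moduli satisfy $Q=\e^v$, while the layer $\Pcal_j$ has length about $\e^{u_j}\le\e^v$. Squaring the sum doubles the length to about $\e^{2u_j}$, which is comparable to $Q^2$, so the large sieve becomes effective.

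\textbf{Setting up the squared sum.} Define
\[
 a_n:=\#\{(q_1,q_2)\in\Pcal_j^2:q_1q_2=n\}.
\]
Then $a_n\le2$, $a_n$ is supported on $n\le N:=\e^{2u_j}$, and
\[
 \sum_n a_n\chi(n)=S_{j,p}(\chi)^2.
\]
We also have $\sum_n|a_n|^2\le 2\sum_n a_n=2M_j^2$.

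\textbf{Restricting to primitive characters.} For a prime modulus $p$, every nonprincipal character mod $p$ is primitive. Moreover $q/\varphi(q)\ge1$, so dropping all non-prime moduli only decreases the left side of \cref{thm:large-sieve}. Applying the theorem with $Q=\e^v$ gives
\[
 \sum_{p\le\e^v}\ \sum_{\chi\ne\chi_0\,(\mathrm{mod}\,p)}|S_{j,p}(\chi)|^4
 \ll(\e^{2u_j}+\e^{2v})M_j^2\ll\e^{2v}M_j^2,
\]
where the last step uses $u_j\le v$.

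\textbf{Counting bad primes.} Each $p\in\Bcal_j$ has some nonprincipal $\chi$ with $|S_{j,p}(\chi)|>M_j/(20r)$, so it contributes at least $(M_j/20r)^4$ to the left side. Hence
\[
 |\Bcal_j|\ll r^4\,\e^{2v}M_j^{-2}.
\]

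\textbf{Size of $\e^{2v}M_j^{-2}$.} By \eqref{eq:Mj-asymp}, together with $1-\e^{-\delta}\asymp\delta\asymp1/\log r$ and $u_j\asymp r$, we get
\[
 M_j\gg \e^{u_j}/(r\log r).
\]
Also $v-u_j=(r-j)\delta\le r\delta=8\alpha r/\log r$. Combining these,
\[
 |\Bcal_j|\ll r^6(\log r)^2\exp\!\bigl(16\alpha r/\log r\bigr)=\exp(o(r)),
\]
uniformly in $j$.

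\textbf{Main obstacle.} The only delicate step is recognizing that one must square the sum and take a fourth moment; the second moment alone gives $\e^{2v}$ against $M_j$, which is far too weak. The other point to check is that the loss $\e^{2(v-u_j)}$, coming from the layers sitting below $\e^v$, is still $\exp(o(r))$. This is exactly where the choice $\delta\asymp1/\log r$ from \cref{rem:lower-constants} is needed: it makes the accumulated offset $v-u_j\le r\delta=O(r/\log r)=o(r)$.
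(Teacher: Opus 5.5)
Your proposal is correct and follows the paper's proof essentially step for step. You square the layer sum to get a sequence of length $\e^{2u_j}$ with $\sum_n a_n^2\le 2M_j^2$, apply \cref{thm:large-sieve} restricted to prime moduli to get the fourth-moment bound $\ll \e^{2v}M_j^2$, deduce $|\Bcal_j|\ll r^4\e^{2v}M_j^{-2}$, and finish with \eqref{eq:Mj-asymp} and $v-u_j\le r\delta=8\alpha r/\log r$; the only cosmetic difference is that you bound $\sum_n a_n^2$ via $a_n\le 2$ rather than the paper's exact identity $2M_j^2-M_j$.
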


\begin{proof}
Let $b_j(n)$ count ordered representations $n=q_1q_2$ with
$q_1,q_2\in\Pcal_j$, and put $b_j(n)=0$ outside this support.  Then
\[
 S_{j,p}(\chi)^2=\sum_n b_j(n)\chi(n),
\]
and unique factorization gives the exact identity
\[
 \sum_n b_j(n)^2=M_j+4\binom{M_j}{2}=2M_j^2-M_j\le2M_j^2.
\]
Every nonprincipal character modulo a prime is primitive.  Applying \cref{thm:large-sieve} with $Q=\lfloor\e^v\rfloor$ and $N=\lfloor\e^{2u_j}\rfloor$ to the sequence $b_j(n)$, and then restricting the resulting sum to prime moduli, gives
\[
 \sum_{p\le\e^v}\frac p{p-1}
 \sum_{\substack{\chi\, (\mathrm{mod}\,p)\\ \chi\ne\chi_0}}
 |S_{j,p}(\chi)|^4
 \ll (\e^{2u_j}+\e^{2v})M_j^2.
\]
Since $u_j\le v$, the large-sieve factor satisfies
\[
 N+Q^2\le \e^{2u_j}+\e^{2v}\le2\e^{2v},
\]
and since $p/(p-1)\ge1$, restriction from all moduli to prime moduli gives
\[
 \sum_{p\le\e^v}\sum_{\substack{\chi\, (\mathrm{mod}\,p)\\ \chi\ne\chi_0}}
 |S_{j,p}(\chi)|^4
 \ll \e^{2v}M_j^2.
\]
Consequently,
\[
 |\Bcal_j|
 \ll (20r)^4\frac{\e^{2v}}{M_j^2}.
\]
By \eqref{eq:Mj-asymp},
\[
 \log\left(\frac{\e^{2v}}{M_j^2}\right)
 \le2(v-u_j)+O(\log r+|\log\delta|)
 \le\frac{16\alpha r}{\log r}+O(\log r)=o(r).
\]
This proves the assertion.
\end{proof}

Let
\[
 \Bcal:=\bigcup_{j=1}^r\Bcal_j,
 \qquad
 \Pcal_j^*:=\Pcal_j\setminus\Bcal.
\]
By \cref{lem:fourth-moment}, $|\Bcal|\le r\exp(o(r))=\exp(o(r))$.
Together with \eqref{eq:Mj-exponential}, this gives, uniformly in $j$,
\begin{equation}\label{eq:bad-set-ratio}
 \frac{|\Bcal|}{M_j}
 \le\exp\bigl(-(\log 2+o(1))r\bigr).
\end{equation}
In particular $|\Bcal|=o(M_j/r)$, and hence
\begin{equation}\label{eq:clean-size}
 |\Pcal_j^*|=(1-o(1))M_j
\end{equation}
uniformly in $j$.  Moreover,
\[
 0\le-\sum_{j=1}^r\log\frac{|\Pcal_j^*|}{M_j}
 \le \frac{2r|\Bcal|}{\min_j M_j}=o(r).
\]
If $p\in\Pcal_i^*$, $j\ne i$, and $\chi\ne\chi_0\pmod p$, then, for
all sufficiently large $r$,
\begin{equation}\label{eq:clean-fourier}
 \left|
 \frac1{|\Pcal_j^*|}\sum_{q\in\Pcal_j^*}\chi(q)
 \right|
 \le
 \frac{M_j/(20r)+|\Bcal|}{M_j-|\Bcal|}
 \le\frac1{10r}.
\end{equation}

Choose independently and uniformly
\[
 p_j\in\Pcal_j^*\qquad(1\le j\le r),
\]
and put
\[
 n=p_1\cdots p_r.
\]
Fix $i$ and condition on the value of $p_i$.  The remaining variables
$p_j$ are still independent, because the original choice was a product
measure.  The intervals are disjoint, so $p_j\ne p_i$ for $j\ne i$;
therefore each $p_j$ is a nonzero element of
\[
 G_i:=(\mathbb Z/p_i\mathbb Z)^\times.
\]
The characters of $G_i$ are exactly the Dirichlet characters modulo
$p_i$ restricted to the units, and \eqref{eq:clean-fourier} verifies the
Fourier hypothesis of \cref{lem:subset-product} with $m=r-1$ and
$\rho=1/(10r)$.  Also, by \eqref{eq:lower-parameters},
\[
 p_i\le\e^v=\frac{2^{r-1}}{r^{10}},
\]
so
\[
 \Lambda_i:=\frac{2^{r-1}}{p_i-1}\ge r^{10}.
\]
Therefore
\[
 \mathbb P\left(
 \text{no nonempty product of the }p_j\ (j\ne i)
 \text{ is }1\pmod{p_i}
 \right)\ll r^{-10}.
\]
A union bound over $i$ gives total failure probability $O(r^{-9})$.  Hence
a proportion $1-o(1)$ of the tuples $(p_1,\ldots,p_r)$ have the property
that, for every $i$, a nonempty subset of the other selected primes has
product congruent to $1$ modulo $p_i$.  That product is a divisor $d>1$
of $n$, so it is a witness divisor for $p_i$.  Therefore every such
selected product $n$ belongs to $\Acal$.

We now count these integers.  Put
\begin{equation}\label{eq:Lr}
 L_r:=\sum_{j=1}^r u_j.
\end{equation}
Every selected product is at most $\e^{L_r}$, and the disjointness of the layers makes the representation unique.  Thus
\begin{equation}\label{eq:lower-count-product}
 A(\e^{L_r})\ge(1-o(1))\prod_{j=1}^r|\Pcal_j^*|.
\end{equation}
From \eqref{eq:Mj-asymp}, \eqref{eq:clean-size}, and the uniform product estimate after \eqref{eq:clean-size},
\[
 \log\prod_{j=1}^r|\Pcal_j^*|
 =L_r-\sum_{j=1}^r\log u_j+r\log(1-\e^{-\delta})+o(r).
\]
Uniformly in $j$, $u_j=\alpha r+O(r/\log r+\log r)$.  Hence
\[
 \sum_{j=1}^r\log u_j=r\log r+O(r),
\]
and, since $\delta=8\alpha/\log r$,
\[
 r\log(1-\e^{-\delta})=-r\log\log r+O(r).
\]
Also,
\begin{equation}\label{eq:Lr-asymp}
 L_r=\alpha r^2+O\left(\frac{r^2}{\log r}+r\log r\right).
\end{equation}
It follows that
\begin{equation}\label{eq:lower-subsequence}
 \log A(\e^{L_r})
 \ge L_r-r\log r-r\log\log r+O(r).
\end{equation}
By \eqref{eq:Lr-asymp},
\[
 r\log r
 =\left(\frac1{2\sqrt{\alpha}}+o(1)\right)\sqrt{L_r}\log L_r,
\]
while $r\log\log r=o(\sqrt{L_r}\log L_r)$.  Consequently,
\begin{equation}\label{eq:lower-at-Lr}
 A(\e^{L_r})
 \ge\e^{L_r}\exp\!\left(-\left(\frac1{2\sqrt{\log 2}}+o(1)\right)
 \sqrt{L_r}\log L_r\right).
\end{equation}
From the definitions one has the exact expression
\[
 L_r=\alpha r(r-1)-10r\log r-\frac{4\alpha r(r-1)}{\log r}.
\]
Consequently
\[
 L_{r+1}-L_r=2\alpha r+O\!\left(\frac r{\log r}+\log r\right),
\]
which is positive for all sufficiently large $r$ and is $O(r)=O(\sqrt{L_r})$.  Thus $(L_r)$ is eventually strictly increasing.  For arbitrary large $x$, choose the unique sufficiently large $r$ such that $L_r\le\log x<L_{r+1}$.  The gap $\log x-L_r$ is $O(\sqrt{\log x})=o(\sqrt{\log x}\log\log x)$, and in this range
\[
 \sqrt{L_r}\log L_r=(1+o(1))\sqrt{\log x}\log\log x.
\]
Moreover,
\[
 \e^{L_r}=x\exp\bigl(- (\log x-L_r)\bigr)
 =x\exp\bigl(-O(\sqrt{\log x})\bigr).
\]
Since $O(\sqrt{\log x})=o(\sqrt{\log x}\log\log x)$, monotonicity of $A$ and \eqref{eq:lower-at-Lr} yield the following.

\begin{theorem}[Lower bound]\label{thm:lower}
As $x\to\infty$,
\[
 A(x)\ge x\exp\!\left(-\left(\frac1{2\sqrt{\log 2}}+o(1)\right)
 \sqrt{\log x}\log\log x\right).
\]
\end{theorem}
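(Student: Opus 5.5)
The plan is to read Theorem~\ref{thm:lower} off the subsequence bound \eqref{eq:lower-at-Lr} by interpolating between consecutive values $L_r$. All of the arithmetic input is already in place: the fourth-moment cleaning (\cref{lem:fourth-moment}), the Fourier bound \eqref{eq:clean-fourier}, and \cref{lem:subset-product} with a union bound over $i$. Together they show that a proportion $1-o(1)$ of the products $p_1\cdots p_r$ with $p_j\in\Pcal_j^*$ lie in $\Acal$. These products are at most $\e^{L_r}$ and are pairwise distinct. Combining \eqref{eq:lower-count-product} with \eqref{eq:Mj-asymp}, \eqref{eq:clean-size} and the estimate $-\sum_j\log(|\Pcal_j^*|/M_j)=o(r)$ gives \eqref{eq:lower-subsequence}. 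It remains to convert the losses $r\log r+r\log\log r+O(r)$ into the $\sqrt{\log x}\log\log x$ scale and to pass from the sequence $\e^{L_r}$ to all $x$.

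\textbf{Step 1.} I would record the exact formula $L_r=\alpha r(r-1)-10r\log r-4\alpha r(r-1)/\log r$. From it, $L_r\sim\alpha r^2$, so that $r=(1+o(1))\sqrt{L_r/\alpha}$ and $\log L_r=(2+o(1))\log r$. Hence
\[
 r\log r=\Bigl(\frac{1}{2\sqrt\alpha}+o(1)\Bigr)\sqrt{L_r}\log L_r .
\]
The terms $r\log\log r$ and $O(r)=O(\sqrt{L_r})$ are $o(\sqrt{L_r}\log L_r)$. This gives \eqref{eq:lower-at-Lr}.

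\textbf{Step 2.} I would show that $L_{r+1}-L_r=2\alpha r+O(r/\log r+\log r)$. This difference is positive for large $r$ and is $O(\sqrt{L_r})$. Given a large $x$, choose $r$ with $L_r\le\log x<L_{r+1}$. Then $0\le\log x-L_r=O(\sqrt{\log x})$, so $\sqrt{L_r}\log L_r=(1+o(1))\sqrt{\log x}\log\log x$.

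\textbf{Step 3.} Since $A$ is monotone,
\[
 A(x)\ge A(\e^{L_r})\ge x\,\e^{-(\log x-L_r)}\exp\!\Bigl(-\Bigl(\tfrac{1}{2\sqrt{\log2}}+o(1)\Bigr)\sqrt{L_r}\log L_r\Bigr).
\]
The factor $\e^{-O(\sqrt{\log x})}$ is absorbed into the $o(1)$ in the exponent, which finishes the proof.

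The main obstacle is not this bookkeeping. It is verifying that all error terms in \eqref{eq:lower-subsequence} are uniformly $O(r)$ or $o(r)$. The delicate ones are the interval-width loss $r\log(1-\e^{-\delta})=-r\log\log r+O(r)$ and the requirement that the cleaning losses and the deviations $u_j-\alpha r=O(r/\log r)$ do not enter at order $r\log r$. I would check these using the uniform estimates $u_j=\alpha r+O(r/\log r+\log r)$ and $|\Bcal|/M_j\le\e^{-(\log2+o(1))r}$ from \eqref{eq:bad-set-ratio}. After that, the remaining task is to confirm that the $O(r^2/\log r)$ discrepancy between $L_r$ and $\alpha r^2$ affects $\log L_r$ only by a factor $1+o(1)$.
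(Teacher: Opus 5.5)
Your proposal is correct and follows the paper's own route: the cleaned prime layers, the subset-product lemma and the union bound over $i$ give the subsequence bound \eqref{eq:lower-at-Lr}, and the gap estimate $L_{r+1}-L_r=2\alpha r+O(r/\log r+\log r)=O(\sqrt{L_r})$ together with the monotonicity of $A$ then extends it to all $x$. Your conversion of the losses $r\log r$, $r\log\log r$ and $O(r)$ to the $\sqrt{L_r}\log L_r$ scale matches the paper's computation.
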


\section{Canonical witnesses and deterministic homogeneous sequences}

For each $n\in\Acal$ and each prime $p\mid n$, fix the canonical witness
\begin{equation}\label{eq:canonical-witness}
 D_p=D_p(n):=\min\{d:d\mid n,\ d>1,\ d\equiv1\pmod p\}.
\end{equation}
The defining set is finite and nonempty because $n\in\Acal$, so the
minimum exists.  Necessarily $p\nmid D_p$.  Minimality is used only to make
the witness choice deterministic; no quantitative property of the minimum
is needed.

Write the distinct prime factors of $n$ in decreasing order as
\[
 p_1>p_2>\cdots>p_t.
\]
For each $1\le r\le t$ we now define a deterministic selected sequence.  Let $S_0=\{p_1,\ldots,p_r\}$.  Having defined a nonempty set $S_{i-1}$, let $q_i$ be its largest element and partition $S_{i-1}\setminus\{q_i\}$ into
\[
 S_{i-1}^{+}:=\{q:q\mid D_{q_i}\},
 \qquad
 S_{i-1}^{0}:=\{q:q\nmid D_{q_i}\}.
\]
If $|S_{i-1}^{+}|>|S_{i-1}^{0}|$, put $S_i=S_{i-1}^{+}$ and label row $i$ positive; otherwise put $S_i=S_{i-1}^{0}$ and label row $i$ zero.  Thus ties are always resolved in favor of the zero class.

\begin{lemma}[Canonical row-homogeneous sequence]\label{lem:homogeneous-sequence}
Let $r\ge1$ and put
\[
 s_r:=1+\lfloor\log_2r\rfloor.
\]
The deterministic procedure above produces distinct primes
\[
 q_1>q_2>\cdots>q_{s_r}
\]
with the property that, for every $i<s_r$, either every later $q_j$ divides $D_{q_i}$ or no later $q_j$ divides $D_{q_i}$.
\end{lemma}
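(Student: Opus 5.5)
We run the halving procedure for exactly $s_r$ steps and check three things: each step is well defined, the chosen primes decrease, and the rows are homogeneous. The key quantity is the size of $S_i$. We show by induction that
\[
 |S_i|\ge\left\lfloor\frac{|S_{i-1}|-1}{2}\right\rfloor+\mathbf 1\bigl[\,|S_{i-1}^{+}|>|S_{i-1}^{0}|\,\bigr]\ \text{(or simply)}\ |S_i|\ge\frac{|S_{i-1}|-1}2.
\]
This holds because $S_{i-1}\setminus\{q_i\}$ is split into two classes and we keep the larger one; on a tie we keep the zero class, which then has exactly half. Since $|S_i|$ is an integer, $|S_i|\ge\lceil(|S_{i-1}|-1)/2\rceil=\lfloor|S_{i-1}|/2\rfloor$.

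Starting from $|S_0|=r$ and iterating gives $|S_i|\ge\lfloor r/2^i\rfloor$, using $\lfloor\lfloor a\rfloor/2\rfloor=\lfloor a/2\rfloor$. Hence $S_{i-1}\ne\emptyset$ whenever $2^{i-1}\le r$, that is, for $i-1\le\lfloor\log_2 r\rfloor$, i.e.\ $i\le s_r$. So the primes $q_1,\dots,q_{s_r}$ are all defined. We stop at $q_{s_r}$; the set $S_{s_r}$ may be empty, which is harmless since the lemma only involves the rows $i<s_r$.

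Distinctness and monotonicity come from the nesting $S_i\subseteq S_{i-1}\setminus\{q_i\}$. For every $j>i$ we have $q_j\in S_{j-1}\subseteq S_i$, and every element of $S_i$ is smaller than $q_i=\max S_{i-1}$. Therefore $q_1>q_2>\cdots>q_{s_r}$, and all of them lie in $\{p_1,\dots,p_r\}$. These primes divide $n$, so the witnesses $D_{q_i}$ are defined; this requires $n\in\Acal$, the standing assumption of this section.

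Homogeneity follows from the same nesting. Fix $i<s_r$. Every later $q_j$, $j>i$, lies in $S_i$, which is either $S_{i-1}^{+}$ or $S_{i-1}^{0}$. If row $i$ is positive, every later $q_j$ divides $D_{q_i}$. If row $i$ is zero, no later $q_j$ divides $D_{q_i}$.

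The only delicate point is the counting step with the floor and ceiling bookkeeping: we must confirm that the tie-breaking rule still yields $|S_i|\ge\lfloor|S_{i-1}|/2\rfloor$, and that this keeps $S_{s_r-1}$ nonempty exactly up to $s_r=1+\lfloor\log_2 r\rfloor$. The rest is immediate from the nesting.
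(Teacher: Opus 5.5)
Your proof is correct and is essentially the paper's argument. The paper proves $N_i\ge\lceil (N_{i-1}-1)/2\rceil$ and iterates it as $N_0\ge 2^k\Rightarrow N_i\ge 2^{k-i}$ with $k=\lfloor\log_2 r\rfloor$, which is the same halving induction as your $|S_i|\ge\lfloor |S_{i-1}|/2\rfloor\Rightarrow |S_i|\ge\lfloor r/2^i\rfloor$, and your nesting argument for $q_1>\cdots>q_{s_r}$ and for homogeneity spells out what the paper leaves implicit (your first display is muddled, but the bound $|S_i|\ge\lceil(|S_{i-1}|-1)/2\rceil$ that you actually use is right).
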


\begin{proof}
Let $N_i:=|S_i|$.  At every step,
\[
 N_i\ge\left\lceil\frac{N_{i-1}-1}{2}\right\rceil.
\]
If $N_0\ge2^k$, induction on $i$ gives
$N_i\ge2^{k-i}$ for $0\le i\le k$: indeed, when
$N_{i-1}\ge2^{k-i+1}$, the displayed recurrence gives
$N_i\ge\lceil(2^{k-i+1}-1)/2\rceil=2^{k-i}$.  Thus the construction
makes $k+1$ selections.  With $k=\lfloor\log_2r\rfloor$, it reaches
$q_{s_r}$.  For $i<s_r$, the row label records which homogeneous class was retained;
no label is attached to the final selected row, since it has no later
selected prime.
\end{proof}

For $r\ge2$, put
\begin{equation}\label{eq:sr-hr}
 s=s_r,
 \qquad
 h=h_r:=\left\lfloor\frac{s_r}{2}\right\rfloor.
\end{equation}
Then
\begin{equation}\label{eq:hr-asymp}
 h_r=\frac{\log r}{2\log 2}+O(1).
\end{equation}

\section{Canonical compression and reconstruction}

We first isolate the valuation fact used in the fiber argument.

\begin{lemma}[One-bit valuation completion]\label{lem:valuation-bit}
Let $Q$ be squarefree, let $n=mQ$, and let $D\mid n$.  Put
$a=\gcd(D,m)$.  For every prime $q\mid Q$ define
\[
 \epsilon_q(D;m,Q):=v_q(D)-v_q(a).
\]
Then $\epsilon_q(D;m,Q)\in\{0,1\}$ and
\begin{equation}\label{eq:valuation-completion}
 D=a\prod_{q\mid Q}q^{\epsilon_q(D;m,Q)}.
\end{equation}
\end{lemma}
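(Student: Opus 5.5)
The plan is to compare $p$-adic valuations prime by prime, since both sides of \eqref{eq:valuation-completion} are positive integers. Fix a prime $\ell$. Because $a=\gcd(D,m)$, we have $v_\ell(a)=\min\{v_\ell(D),v_\ell(m)\}$. The hypothesis $D\mid n=mQ$ gives $v_\ell(D)\le v_\ell(m)+v_\ell(Q)$. Since $Q$ is squarefree, $v_\ell(Q)\in\{0,1\}$, with $v_\ell(Q)=1$ exactly when $\ell\mid Q$. These three facts are all the input needed.

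First suppose $\ell\nmid Q$. Then $v_\ell(D)\le v_\ell(m)$, so $v_\ell(a)=v_\ell(D)$. The right-hand side of \eqref{eq:valuation-completion} has $\ell$-valuation $v_\ell(a)$, because the product $\prod_{q\mid Q}q^{\epsilon_q}$ contributes only primes dividing $Q$. So the two sides agree at $\ell$.

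Now take $\ell=q\mid Q$. We split into two cases.
\begin{itemize}
\item If $v_q(D)\le v_q(m)$, then $v_q(a)=v_q(D)$ and $\epsilon_q=0$.
\item If $v_q(D)>v_q(m)$, then $v_q(a)=v_q(m)$. Combining $v_q(m)<v_q(D)\le v_q(m)+1$ forces $v_q(D)=v_q(m)+1$, so $\epsilon_q=1$.
\end{itemize}
In either case $\epsilon_q\in\{0,1\}$. The right-hand side of \eqref{eq:valuation-completion} has $q$-valuation $v_q(a)+\epsilon_q=v_q(D)$, directly from the definition of $\epsilon_q$. Because the primes $q\mid Q$ are distinct, each appears exactly once in the product.

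Since both sides are positive integers with equal valuations at every prime, unique factorization gives equality. There is no real obstacle here. The only point needing care is the case $v_q(D)>v_q(m)$, where squarefreeness of $Q$ is what caps the excess valuation at one.
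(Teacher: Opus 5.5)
Your proof is correct and follows the same route as the paper's: compare valuations prime by prime. At $q\mid Q$, squarefreeness gives $v_q(n)=v_q(m)+1$, which together with $v_q(a)=\min\{v_q(D),v_q(m)\}$ caps the excess at one; at primes outside $Q$, the exponent of $D$ is already fully recorded in $a$.
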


\begin{proof}
For $q\mid Q$ one has $v_q(n)=v_q(m)+1$.  Since $D\mid n$,
\[
 v_q(a)=\min\{v_q(D),v_q(m)\},
\]
so $v_q(D)-v_q(a)$ is $0$ or $1$.  At primes outside $Q$, the
exponents of $n$ and $m$ agree, and the exponent of $D$ is already
recorded in $a$.  This proves \eqref{eq:valuation-completion} prime by
prime.
\end{proof}

For reference, the $q$-adic bookkeeping is
\begin{center}
\begin{tabular}{@{}cccc@{}}
\toprule
$v_q(n)$ & $v_q(m)$ & $v_q(\gcd(D,m))$ & $v_q(D)$ \\
\midrule
$1$ & $0$ & $0$ & $\epsilon_q$ \\
$e+1$ & $e$ & $\min\{v_q(D),e\}$
    & $v_q(\gcd(D,m))+\epsilon_q$ \\
\bottomrule
\end{tabular}
\qquad $\epsilon_q\in\{0,1\}$.
\end{center}

Fix $n\in\Acal$ and $2\le r\le t=\omega(n)$.  Apply
\cref{lem:homogeneous-sequence}, and abbreviate $s=s_r$ and $h=h_r$.
Let
\begin{align*}
 I_+&:=\{1\le i\le h:\text{row }i\text{ is positive}\},\\
 I_0&:=\{1,\ldots,h\}\setminus I_+,
 & h_+&:=|I_+|.
\end{align*}
Since $s-h\ge h\ge h_+$, define
\begin{equation}\label{eq:deleted-slots}
 E:=I_0\cup\{h+1,\ldots,h+h_+\}
\end{equation}
and
\begin{equation}\label{eq:Qr-def}
 Q_r(n):=\prod_{j\in E}q_j,
 \qquad m_r(n):=\frac{n}{Q_r(n)}.
\end{equation}
Then $Q_r(n)$ is squarefree, $\omega(Q_r(n))=h$, and
\begin{equation}\label{eq:Qr-lower}
 Q_r(n)\ge p_r^h.
\end{equation}
A deleted slot $j\in E$ is \emph{exact} if $v_{q_j}(n)=1$, equivalently
$q_j\nmid m_r(n)$, and is \emph{visible} if $q_j\mid m_r(n)$, in which
case $v_{q_j}(n)\ge2$.

\begin{center}
\small
\begin{tabular}{@{}p{0.30\textwidth}ccp{0.32\textwidth}@{}}
\toprule
Slot type & Deleted? & Encoded in $\mathbf b$? & Recovery \\
\midrule
Positive first-half row & no & yes & already visible in $m$ \\
Visible zero-row slot & yes & yes & already visible in $m$ \\
Exact zero-row slot & yes & no & induction from $D_{q_i}-1$ \\
Visible deleted suffix slot & yes & yes & already visible in $m$ \\
Exact deleted suffix slot & yes & no & common product recovered by CRT \\
Undeleted suffix slot & no & yes & already visible in $m$ \\
\bottomrule
\end{tabular}
\normalsize
\end{center}

\begin{definition}[Half-row fiber record]\label{def:fiber-record}
Fix $x,t,r$ and $m$.  Let $n\le x$ lie in $\Acal$, satisfy
$\omega(n)=t$, and have $m_r(n)=m$.  Its $r$-record is
\[
 \mathcal R_r(n)=
 (I_+,\mathbf b,\mathbf a,\boldsymbol\epsilon,\boldsymbol\kappa),
\]
where:
\begin{enumerate}[label=\textnormal{(R\arabic*)},leftmargin=3.1em]
\item $I_+\subseteq\{1,\ldots,h\}$ is the positive-row set;
\item for $1\le j\le s$, let $b_j=q_j$ if $q_j\mid m$, and let
$b_j=0$ otherwise;
\item $a_i:=\gcd(D_{q_i},m)$ for $1\le i\le h$;
\item the rows of $\boldsymbol\epsilon$ are indexed by $1\le i\le h$,
and its columns by the elements of $E$ in increasing order; for $j\in E$,
\[
 \epsilon_{i,j}:=v_{q_j}(D_{q_i})-v_{q_j}(a_i)\in\{0,1\};
\]
\item if $i\in I_0$ and $b_i=0$, then $\kappa_i$ is the position of
$q_i$ in the increasing list of distinct prime divisors of
$D_{q_i}-1$; in every other row $\kappa_i=0$.
\end{enumerate}
\end{definition}

By a \emph{formal $r$-record} we mean any tuple with the following
coordinate ranges, whether or not it is realized by an integer $n$:
$I_+\subseteq\{1,\ldots,h\}$; each $b_j$ is either zero or a prime divisor
of $m$; each $a_i$ is a positive divisor of $m$; every
$\epsilon_{i,j}$ belongs to $\{0,1\}$; and every $\kappa_i$ is an integer
between zero and $\lfloor\log_2x\rfloor$.

The last entry is well-defined: since $q_i\mid D_{q_i}-1$ and
$D_{q_i}>1$, the positive integer $D_{q_i}-1$ has $q_i$ as a prime
divisor.  Moreover,
\begin{equation}\label{eq:kappa-bound}
 1\le\kappa_i\le\omega(D_{q_i}-1)\le\lfloor\log_2x\rfloor.
\end{equation}

\begin{lemma}[Number of records]\label{lem:record-count}
For fixed $x,t,r$ and positive integer $m$, the number of formal records is at most
\begin{equation}\label{eq:record-count-exact}
 2^h(1+\omega(m))^s\tau(m)^h2^{h^2}
 (1+\lfloor\log_2x\rfloor)^h.
\end{equation}
Consequently it is at most
\begin{equation}\label{eq:record-count-soft}
 \tau(m)^h
 \exp\!\left(C_{\mathrm{fib}}\bigl((\log(t+2))^2+
 \log(t+2)\log\log(3x)\bigr)\right)
\end{equation}
for an absolute constant $C_{\mathrm{fib}}$.
\end{lemma}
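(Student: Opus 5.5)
The plan is to count the formal records coordinate by coordinate, using the ranges in the definition of a formal $r$-record, and multiply. The set $I_+\subseteq\{1,\ldots,h\}$ has at most $2^h$ choices. Each of the $s$ entries $b_j$ is either $0$ or one of the $\omega(m)$ prime divisors of $m$, giving $(1+\omega(m))^s$. Each of the $h$ entries $a_i$ is a positive divisor of $m$, giving $\tau(m)^h$. The matrix $\boldsymbol\epsilon$ has $h$ rows and $|E|=h$ columns, since $|E|=|I_0|+h_+=h$. With entries in $\{0,1\}$, it contributes $2^{h^2}$. Finally, each of the $h$ entries $\kappa_i$ lies in $\{0,\ldots,\lfloor\log_2x\rfloor\}$, giving $(1+\lfloor\log_2x\rfloor)^h$. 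The product is exactly \eqref{eq:record-count-exact}.

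For \eqref{eq:record-count-soft}, I will keep $\tau(m)^h$ and bound the remaining factors using $r\le t$. By definition, $s=s_r=1+\lfloor\log_2 r\rfloor\le 1+\log_2 t\ll\log(t+2)$, and $h\le s$. Since $m\mid n$, we have $\omega(m)\le\omega(n)=t$. Then:
\begin{itemize}
\item $(1+\omega(m))^s\le\exp\bigl(s\log(t+1)\bigr)\le\exp\bigl(O((\log(t+2))^2)\bigr)$;
\item $2^h\le\exp(O(\log(t+2)))$;
\item $2^{h^2}\le\exp\bigl(O((\log(t+2))^2)\bigr)$.
\end{itemize}
For the last factor, $1+\lfloor\log_2x\rfloor\le 1+\log x/\log 2\ll\log(3x)$. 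This holds for all $x\ge1$, the relevant range since $n\le x$. Hence
\[
(1+\lfloor\log_2x\rfloor)^h\le\exp\bigl(h(\log\log(3x)+O(1))\bigr)\le\exp\bigl(O(\log(t+2)\log\log(3x))\bigr).
\]
Here $\log\log(3x)\ge\log\log3>0$, which absorbs the $O(1)$.

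Collecting the exponents gives \eqref{eq:record-count-soft} with an absolute constant $C_{\mathrm{fib}}$. The only mild care point is to make all constants absolute. This needs $\log(t+2)\ge\log 3>0$ and $\log\log(3x)$ bounded below by a positive constant, so that additive $O(1)$ and $O(\log(t+2))$ terms can be absorbed into the two displayed terms. The argument is purely combinatorial; I expect no real obstacle beyond checking that $|E|=h$, so that $\boldsymbol\epsilon$ is an $h\times h$ array.
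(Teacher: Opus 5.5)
Your proposal is correct and matches the paper's proof: the same coordinate-by-coordinate count using $|E|=h$, followed by absorbing every factor except $\tau(m)^h$ via $s,h=O(\log(t+2))$. The one small difference is that you bound $\omega(m)\le t$ via $m\mid n$, whereas the paper uses $\omega(m)\le\log_2 x$. Both tacitly assume $m$ comes from an actual $n\le x$ with $\omega(n)=t$: for a completely arbitrary $m$ the factor $(1+\omega(m))^s$ is unbounded, but an empty fiber is trivial in the application. Your version also gives the marginally better bound $O((\log(t+2))^2)$ for that factor.
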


\begin{proof}
There are at most $2^h$ choices for $I_+$.  Each $b_j$ is zero or a
prime divisor of $m$, giving at most $(1+\omega(m))^s$ ordered vectors.
There are $\tau(m)^h$ choices for the $a_i$.  Since $|E|=h$, the
valuation matrix has at most $h^2$ bits.  Finally, each branch index is
bounded by \eqref{eq:kappa-bound}, and allowing zero in every row gives
the last factor.  Since $s,h=O(\log(t+2))$ and
$\omega(m)\le\log_2x$, the logarithm of all factors except
$\tau(m)^h$ has the asserted bound.
\end{proof}

The proof of injectivity is divided into three reconstruction lemmas.
At the beginning, every nondeleted selected prime and every visible
deleted prime is known from $\mathbf b$.  During zero-row recovery we
maintain the invariant that all exact deleted slots with smaller index
have already been recovered.

\begin{lemma}[Exact zero-row reconstruction]\label{lem:zero-row-recovery}
Suppose $i\in I_0$ and $b_i=0$.  Once every exact deleted slot $j<i$
has been recovered, the record determines $q_i$ uniquely.
\end{lemma}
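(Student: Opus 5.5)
The plan is to recover the full witness $D_{q_i}$ from the record, and then read off $q_i$ as a prime factor of $D_{q_i}-1$ using the branch index $\kappa_i$. Throughout, $m=m_r(n)$ and $Q=Q_r(n)$ are as in \eqref{eq:Qr-def}, so $Q$ is squarefree and $n=mQ$. Since $D_{q_i}\mid n$, \cref{lem:valuation-bit} applies with $D=D_{q_i}$ and gives
\[
 D_{q_i}=a_i\prod_{j\in E}q_j^{\epsilon_{i,j}},
\]
where $a_i=\gcd(D_{q_i},m)$ is coordinate (R3) and the $\epsilon_{i,j}$ are the entries (R4). So it suffices to show that every factor $q_j^{\epsilon_{i,j}}$ on the right is already known to the decoder.

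Split the columns $j\in E$ into three ranges and treat each in turn.

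\emph{Column $j=i$.} Here $\epsilon_{i,i}=0$, because $q_i\nmid D_{q_i}$ by the congruence $D_{q_i}\equiv1\pmod{q_i}$. No knowledge of $q_i$ is needed.

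\emph{Columns $j>i$.} Row $i$ is a zero row with $i<h<s$, so by \cref{lem:homogeneous-sequence} no later selected prime divides $D_{q_i}$. Hence $v_{q_j}(D_{q_i})=0$, which forces $v_{q_j}(a_i)=0$ and $\epsilon_{i,j}=0$. These factors are trivial whatever $q_j$ is.

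\emph{Columns $j<i$.} Either $b_j\neq0$, in which case $q_j=b_j$ is read off from (R2), or $b_j=0$. In the latter case $j$ is an exact deleted slot with $j<i$, so $q_j$ is known by the inductive hypothesis of the lemma.

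Thus the right-hand side is a product of quantities that are either recorded or already recovered. So $D_{q_i}$ is determined, and therefore so is the positive integer $D_{q_i}-1$.

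To finish, note that (R5) applies because $i\in I_0$ and $b_i=0$. It says $q_i$ is the $\kappa_i$-th smallest distinct prime divisor of $D_{q_i}-1$, and this is well defined by \eqref{eq:kappa-bound}. Hence $q_i$ is determined.

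The only delicate point I expect is the formal meaning of ``the record determines $q_i$ uniquely''. It must mean that two integers $n,n'\le x$ in $\Acal$ with $\omega=t$, the same $m$, and the same $r$-record have the same $i$-th selected prime. To see this, run the argument for $n$ and $n'$ in parallel. The reconstructed witnesses coincide: they use the same $a_i$ and $\epsilon_{i,j}$; the same $q_j$ for $j<i$ in $E$ (by induction or by $\mathbf b$); and trivial factors for $j\ge i$ in both. Consequently $D_{q_i}(n)=D_{q'_i}(n')$, and the same $\kappa_i$ then selects the same prime.

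The one thing to check carefully is the vanishing for $j>i$. It rests on the homogeneity of zero rows, which requires $i<s$; this holds since $i\le h\le s/2<s$. It also uses the fact that $D_{q_i}$ is literally the canonical witness from \eqref{eq:canonical-witness} used in the halving procedure, so the zero-row label refers to the same divisor.
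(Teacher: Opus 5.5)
Your proposal is correct and follows the paper's proof. Both arguments apply the one-bit valuation lemma to $D_{q_i}$ and use $q_i\nmid D_{q_i}$ together with zero-row homogeneity to make the columns $j\ge i$ trivial. Both then fill the columns $j<i$ from $\mathbf b$ or the recovery hypothesis, and let $\kappa_i$ select $q_i$ among the prime divisors of $D_{q_i}-1$. Your explicit parallel-run reading of ``determines uniquely'' only makes precise what the paper leaves implicit.
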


\begin{proof}
The canonical witness $D_{q_i}$ is not divisible by $q_i$.  Because row
$i$ is zero, no selected prime $q_j$ with $j>i$ divides $D_{q_i}$.
Every prime factor outside the deleted product $Q_r(n)$ is already
captured, with its full valuation, by $a_i=\gcd(D_{q_i},m)$.  Hence the
only factors missing from $a_i$ are deleted primes in slots $j<i$, all
of which are either visible or have already been recovered.  By
\cref{lem:valuation-bit},
\begin{equation}\label{eq:zero-row-reconstruction-formula}
 D_{q_i}=a_i\prod_{\substack{j\in E\\j<i}}
 q_j^{\epsilon_{i,j}}.
\end{equation}
The right side is known.  The recorded index $\kappa_i$ then selects
$q_i$ uniquely among the distinct prime divisors of $D_{q_i}-1$.
\end{proof}

After applying \cref{lem:zero-row-recovery} successively in increasing
order, all exact zero-row slots are known.  Define
\[
 F:=\{j:h<j\le h+h_+,\ b_j=0\},
 \qquad P:=\prod_{j\in F}q_j.
\]

\begin{lemma}[Positive-row common product]\label{lem:positive-common-product}
For every $i\in I_+$, define
\begin{equation}\label{eq:positive-row-Ci}
 C_i:=a_i\prod_{j\in E\setminus F}q_j^{\epsilon_{i,j}}.
\end{equation}
Then $C_i$ is known from the record and the recovered zero rows, and
\begin{equation}\label{eq:positive-row-DPCi}
 D_{q_i}=P C_i.
\end{equation}
Moreover, $\gcd(C_i,q_i)=1$.
\end{lemma}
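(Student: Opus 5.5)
The plan is to start from the one-bit valuation completion, \cref{lem:valuation-bit}, applied with $Q=Q_r(n)$, $m=m_r(n)$ and $D=D_{q_i}$. Since $D_{q_i}\mid n$ and $a_i=\gcd(D_{q_i},m)$, it gives the exact factorization
\[
 D_{q_i}=a_i\prod_{j\in E}q_j^{\epsilon_{i,j}}.
\]
The statement then reduces to two checks. First, the factors indexed by $F$ contribute exactly $P$. Second, the remaining factors, which make up $C_i$, are computable from the record together with the zero rows already recovered.

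For the first check, fix $i\in I_+$ and $j\in F$. Then $i\le h<j$, so $q_j$ is a later selected prime in the sense of \cref{lem:homogeneous-sequence}. Row $i$ is positive, so $q_j\mid D_{q_i}$. Because $j\in F$ means $b_j=0$, the slot is exact: $q_j\nmid m$, and hence $v_{q_j}(a_i)=0$. Since $v_{q_j}(n)=1$, we have $v_{q_j}(D_{q_i})=1$, and therefore $\epsilon_{i,j}=1$. The product over $j\in F$ is thus $\prod_{j\in F}q_j=P$, and splitting $E=F\sqcup(E\setminus F)$ in the display gives \eqref{eq:positive-row-DPCi}.

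For the second check, we sort $j\in E\setminus F$ into two cases.
\begin{itemize}
 \item If $j\in I_0$, then $q_j$ is either visible, so $b_j=q_j$ is read off from $\mathbf b$, or exact, in which case it has been recovered by the successive application of \cref{lem:zero-row-recovery}. This holds for every zero row, including those with $j>i$.
 \item If $h<j\le h+h_+$ and $j\notin F$, then $b_j\ne0$, so $q_j=b_j$ is known.
\end{itemize}
In both cases $a_i$ and $\epsilon_{i,j}$ are record entries, so $C_i$ is known.

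Finally, $C_i\mid D_{q_i}$ and $D_{q_i}\equiv1\pmod{q_i}$, so $q_i\nmid C_i$, which gives $\gcd(C_i,q_i)=1$. The only delicate point is the forced value $\epsilon_{i,j}=1$ on $F$. It uses three facts together: positivity of row $i$, the inequality $i\le h<j$, and exactness of the slot $q_j$. I expect no other obstacle.
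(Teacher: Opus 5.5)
Your proposal is correct and follows essentially the paper's argument. The paper's ``prime by prime'' justification is exactly \cref{lem:valuation-bit}, which you invoke explicitly, and your check that $\epsilon_{i,j}=1$ for $j\in F$ spells out the paper's remark that each exact suffix prime occurs exactly once in every positive-row witness; your coprimality step via $C_i\mid D_{q_i}$ is slightly more direct than the paper's, which also cites $q_i\nmid P$ unnecessarily.
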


\begin{proof}
A positive first-half row contains every later selected prime, in
particular every suffix prime.  Each exact deleted suffix prime has
exponent one in $n$, and therefore occurs exactly once in every positive
row witness; their common contribution is $P$.  Every other deleted
prime is already known, and its possible one-step valuation excess over
$a_i$ is the recorded bit $\epsilon_{i,j}$.  At primes outside the
deleted product, $a_i$ contains the full valuation.  This proves
\eqref{eq:positive-row-DPCi} prime by prime.  Since a witness for $q_i$
is never divisible by $q_i$, while $q_i\nmid P$, we have
$q_i\nmid C_i$.
\end{proof}

\begin{lemma}[CRT recovery and size]\label{lem:crt-recovery}
The record determines $P$ uniquely.  More precisely,
\begin{equation}\label{eq:crt-uniqueness-cong}
 P\equiv C_i^{-1}\pmod{q_i}\qquad(i\in I_+),
\end{equation}
and, with $R:=\prod_{i\in I_+}q_i$,
\begin{equation}\label{eq:P-size}
 1\le P<R
\end{equation}
whenever $I_+\ne\varnothing$.  If $I_+=\varnothing$, then $P=1$.
\end{lemma}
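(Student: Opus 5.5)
The plan is to read the congruences straight off the identity of \cref{lem:positive-common-product}, and then to get the size bound from the fact that every suffix prime lies below every first-half prime.

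\textbf{Step 1 (the case $I_+=\varnothing$).} Here $h_+=0$, so the index range $h<j\le h+h_+$ is empty. Hence $F=\varnothing$ and $P=1$ as an empty product.

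\textbf{Step 2 (the congruences).} Now suppose $I_+\ne\varnothing$ and fix $i\in I_+$. By \cref{lem:positive-common-product}, $D_{q_i}=PC_i$. Since $D_{q_i}$ is a witness for $q_i$, we have $D_{q_i}\equiv1\pmod{q_i}$, so $PC_i\equiv1\pmod{q_i}$. The same lemma gives $\gcd(C_i,q_i)=1$, so $C_i$ is invertible modulo $q_i$. This yields \eqref{eq:crt-uniqueness-cong}.

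\textbf{Step 3 (the size bound).} The first-half rows $i\in I_+$ carry distinct primes, so $R$ is their product and the moduli are pairwise coprime. By the Chinese remainder theorem the congruences determine $P$ modulo $R$. The bound \eqref{eq:P-size} then upgrades this to exact determination.

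To prove \eqref{eq:P-size}, note first that $P\ge1$ trivially. For the upper bound, observe that $F\subseteq\{h+1,\ldots,h+h_+\}$, so $|F|\le h_+=|I_+|$. Every $j\in F$ satisfies $j>h\ge i$ for every $i\in I_+$. The selected primes are strictly decreasing, so $q_j<q_i$ for all such pairs.

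Pair the elements of $F$ injectively with elements of $I_+$. Each factor $q_j$ of $P$ is then strictly smaller than its partner $q_i$. Any unpaired factors of $R$ are primes, hence at least $2>1$. Therefore $P<R$ whenever $F\ne\varnothing$. If $F=\varnothing$, then $P=1<2\le R$.

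Consequently $P$ is the unique residue in $[1,R)$ satisfying the congruences. That residue is computable from the record, because the $q_i$ with $i\in I_+$ are visible (positive rows are not deleted, so $b_i=q_i$) and the $C_i$ are known.

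\textbf{Main obstacle.} The only delicate point is confirming that everything used is genuinely available at this stage. This means checking that the $q_i$ for $i\in I_+$ are known from $\mathbf b$, and that the $C_i$ depend only on the record together with the recovered zero rows. Both facts are supplied by \cref{lem:positive-common-product} and the setup, so the argument reduces to the elementary comparison in Step 3.
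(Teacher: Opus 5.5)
Your proposal is correct and follows the paper's proof essentially step for step. The congruences come from $D_{q_i}=PC_i\equiv1\pmod{q_i}$ together with $\gcd(C_i,q_i)=1$. The bound $P<R$ comes from pairing each index of $F$ with a distinct positive row: the paper pairs $f_\ell$ with $i_\ell$ in increasing order, while your arbitrary injection works equally well because every index in $F$ exceeds $h$. The Chinese remainder theorem then pins $P$ down as the unique element of its residue class in $[1,R)$, exactly as in the paper.
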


\begin{proof}
The congruences follow from
\[
 D_{q_i}=PC_i\equiv1\pmod{q_i}
 \qquad\text{and}\qquad
 \gcd(C_i,q_i)=1.
\]  If $I_+=\varnothing$, then $h_+=0$, so there are no
deleted suffix slots and $P=1$.

Assume $I_+\ne\varnothing$.  Write
$I_+=\{i_1<\cdots<i_{h_+}\}$ and
$F=\{f_1<\cdots<f_k\}$, where $k\le h_+$.  If $k=0$, then $P=1<R$,
because $R$ is a nonempty product of primes.  Assume henceforth that
$k\ge1$.  Since every suffix index is larger than $h$ and every
$i_\ell\le h$, we have $f_\ell>i_\ell$ for $1\le\ell\le k$.  The
selected primes decrease with their indices, so
$q_{f_\ell}<q_{i_\ell}$.  Therefore
\[
 P=\prod_{\ell=1}^k q_{f_{\ell}}
 <\prod_{\ell=1}^k q_{i_{\ell}}
 \le\prod_{i\in I_+}q_i=R.
\]
The Chinese remainder theorem produces a unique integer $P_0$ satisfying
$0\le P_0<R$ and \eqref{eq:crt-uniqueness-cong}.  Every residue
$C_i^{-1}\pmod{q_i}$ is nonzero, hence $P_0\ne0$.  Since the actual $P$
satisfies the same congruences and $1\le P<R$, we have $P=P_0$.
\end{proof}

\begin{proposition}[Reconstruction of a fixed-prefix fiber]
\label{prop:record-reconstruction}
Fix $x,t,r,m$.  A record arising from an integer $n\le x$ in $\Acal$
with $\omega(n)=t$ and $m_r(n)=m$ determines $n$ uniquely.
\end{proposition}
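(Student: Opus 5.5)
The plan is to show that the record, together with the fixed data $x,t,r,m$, determines $Q_r(n)$. Then $n=mQ_r(n)$ is determined. Since $Q_r(n)=\prod_{j\in E}q_j$, it suffices to recover every deleted prime $q_j$, $j\in E$.

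First, $E$ itself is determined by the record. The set $I_+$ is recorded, and $h=h_r$ and $s=s_r$ depend only on $r$. Hence $h_+=|I_+|$, $I_0=\{1,\ldots,h\}\setminus I_+$ and $E=I_0\cup\{h+1,\ldots,h+h_+\}$ are all known. The deleted slots split into visible and exact ones according to whether $b_j\ne0$ or $b_j=0$, since $b_j=q_j$ exactly when $q_j\mid m$. Every visible deleted prime is therefore read off directly from $\mathbf b$. The same holds for every nondeleted selected prime: such a prime has full valuation in $m$ and so satisfies $b_j=q_j$. In particular, each positive first-half prime $q_i$ with $i\in I_+$ is known, and hence so is $R$.

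Next, the exact zero-row slots are recovered by induction on $i\in I_0$ with $b_i=0$, in increasing order, using \cref{lem:zero-row-recovery}. Its hypothesis requires that all exact deleted slots $j<i$ are already known. Such slots are either exact zero-row slots of smaller index, which are handled by the induction, or exact suffix slots. Suffix slots have indices exceeding $h\ge i$, so they never occur below $i$. After this step every element of $E\setminus F$ is known.

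It remains to recover the exact deleted suffix slots, indexed by $F$. By \cref{lem:positive-common-product}, each $C_i$ ($i\in I_+$) is computable from the record and the recovered zero rows. By \cref{lem:crt-recovery}, $P=\prod_{j\in F}q_j$ is then uniquely determined: if $I_+=\varnothing$ then $P=1$, and otherwise $P$ is the CRT representative in $[1,R)$. We do not need the individual primes in $F$, only their product. Thus
\[
Q_r(n)=P\prod_{j\in E\setminus F}q_j
\]
is determined, and so is $n=mQ_r(n)$.

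The main obstacle is the bookkeeping that every slot is covered: each $j\in E$ lies in exactly one of the classes visible, exact zero-row, or exact suffix ($F$). The induction order must be checked to be legitimate, that is, zero-row recovery never needs an $F$-slot. This holds because $F$-indices exceed $h$, and because a zero row excludes all later selected primes from $D_{q_i}$, so formula \eqref{eq:zero-row-reconstruction-formula} involves only slots $j<i$.
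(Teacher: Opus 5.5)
Your proposal is correct and follows the paper's proof: read the visible and nondeleted primes from $\mathbf b$, recover the exact zero-row primes in increasing order via \cref{lem:zero-row-recovery}, then recover $P$ via \cref{lem:positive-common-product,lem:crt-recovery}. The one difference is that you skip the paper's last step, which factors $P$ and assigns its prime factors to the slots in $F$ by size. That step is indeed unnecessary for determining $n$, because $Q_r(n)=P\prod_{j\in E\setminus F}q_j$ is already determined once every slot in $E\setminus F$ is known.
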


\begin{proof}
First read every visible selected prime from $\mathbf b$.  Then process
$i=1,\ldots,h$ and apply \cref{lem:zero-row-recovery} whenever
$i\in I_0$ and $b_i=0$.  This recovers all exact zero-row primes.
Next \cref{lem:positive-common-product,lem:crt-recovery} recover the
integer $P$.

Take the prime factorization of the already determined integer $P$.
Write $F=\{f_1<\cdots<f_k\}$.  A record arising from an actual integer has
$P$ squarefree and, if $k>0$,
\[
 P=\ell_1\cdots\ell_k,
 \qquad \ell_1>\cdots>\ell_k
\]
for distinct primes $\ell_j$.  In every valid preimage the selected primes
satisfy $q_1>\cdots>q_s$, so the only possible assignment is
\[
 q_{f_j}=\ell_j\qquad(1\le j\le k).
\]
The already known visible suffix primes give consistency checks for the
strict inequalities between adjacent slots.  If squarefreeness, the factor
count, or any ordering check fails, the formal record has no preimage.
Thus every deleted slot is known, and
\[
 Q_r(n)=\prod_{j\in E}q_j,
 \qquad n=mQ_r(n).
\]
Hence two preimages with the same record coincide.  Factorization is used
only to define an injective counting decoder; no computational-complexity
claim is made.
\end{proof}

\begin{remark}[Schematic slot pattern]\label{rem:schematic-record}
Let $h=3$, $I_+=\{1,3\}$, and $I_0=\{2\}$.  Then $h_+=2$ and
$E=\{2,4,5\}$.  In the schematic pattern
\[
 \underbrace{q_1}_{\text{positive, visible}}
 >\underbrace{q_2}_{\text{zero, exact}}
 >\underbrace{q_3}_{\text{positive, visible}}
 >\underbrace{q_4}_{\text{suffix, exact}}
 >\underbrace{q_5}_{\text{suffix, visible}},
\]
$q_2$ is recovered first from its zero-row witness.  The only remaining
unknown suffix factor is then $P=q_4$, which is recovered from the two
congruences modulo $q_1$ and $q_3$.  The known neighbouring prime $q_5$
fixes the unique suffix placement.  The general proof is the same, with
several exact factors recovered simultaneously through their product.
\end{remark}

\begin{proposition}[Fixed-prefix fiber bound]\label{prop:fixed-prefix-fiber}
Let $n\le x$ range over integers in $\Acal$ with $\omega(n)=t$, and fix
$2\le r\le t$.  For every positive integer $m$, the number of $n$ satisfying
$m_r(n)=m$ is at most
\[
 \tau(m)^{h_r}
 \exp\!\left(C_{\mathrm{fib}}\bigl((\log(t+2))^2+
 \log(t+2)\log\log(3x)\bigr)\right).
\]
\end{proposition}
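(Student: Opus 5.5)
The bound follows by combining the injectivity of the record map with the count of formal records. Fix $x,t,r$ and $m$, and let $\mathcal N$ denote the set of $n\le x$ in $\Acal$ with $\omega(n)=t$ and $m_r(n)=m$. If $\mathcal N$ is empty there is nothing to prove. Otherwise, for each $n\in\mathcal N$, \cref{def:fiber-record} assigns the record $\mathcal R_r(n)$. The parameters $s=s_r$, $h=h_r$ and the size $|E|=h$ depend only on $r$, so all records share the same shape.

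The first step is to check that each $\mathcal R_r(n)$ is a formal $r$-record. Clearly $I_+\subseteq\{1,\ldots,h\}$. Each $b_j$ is either $0$ or a prime $q_j$ dividing $m$. Each $a_i=\gcd(D_{q_i},m)$ is a positive divisor of $m$. The entries $\epsilon_{i,j}$ lie in $\{0,1\}$ by \cref{lem:valuation-bit}, applied with $Q=Q_r(n)$ squarefree and $D=D_{q_i}\mid n$. Finally, each $\kappa_i$ lies between $0$ and $\lfloor\log_2x\rfloor$ by \eqref{eq:kappa-bound}, which holds because $D_{q_i}\le n\le x$ gives $\omega(D_{q_i}-1)\le\log_2 x$.

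The second step is injectivity. By \cref{prop:record-reconstruction}, two integers in $\mathcal N$ with the same record coincide. So $n\mapsto\mathcal R_r(n)$ is an injection from $\mathcal N$ into the set of formal $r$-records, and $|\mathcal N|$ is at most the number of formal records. \Cref{lem:record-count} bounds that number by \eqref{eq:record-count-soft}, which is exactly the claimed bound.

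The main obstacle is already handled in the earlier lemmas, but the soft bound \eqref{eq:record-count-soft} deserves a check. We have $s\le 1+\log_2 r\le 1+\log_2 t$ and $h\le s$. Also $1+\omega(m)\le 1+\log_2 x$, since $m\le x$. Hence the factor $(1+\omega(m))^s(1+\lfloor\log_2x\rfloor)^h$ is $\exp\bigl(O(\log(t+2)\log\log(3x))\bigr)$, while $2^h2^{h^2}=\exp\bigl(O((\log(t+2))^2)\bigr)$. Only $\tau(m)^{h}$ remains, and $h=h_r$ as required.
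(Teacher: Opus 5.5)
Your proposal is correct and follows the paper's own proof: the record map is injective by \cref{prop:record-reconstruction}, and its range is bounded by the count of formal records in \cref{lem:record-count}. Your extra checks, that each realized record is a formal record and that a nonempty fiber forces $m\le x$ (so $\omega(m)\le\log_2x$), only make explicit what the paper leaves implicit.
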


\begin{proof}
The record map is injective by \cref{prop:record-reconstruction}, and
its range is bounded by \cref{lem:record-count}.
\end{proof}

\begin{lemma}[One-prime compression]\label{lem:one-prime}
Let $n\in\Acal$, let $p=P^+(n)$, put $Q=p$ and $m=n/p$.  For a fixed positive integer $m$,
the number of possible $n\le x$ is at most
\[
 \tau(m)(1+\lfloor\log_2x\rfloor).
\]
\end{lemma}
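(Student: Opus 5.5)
We bound the number of possible $n$ by an injective encoding, in the same spirit as \cref{prop:record-reconstruction} but with a single deleted prime. Fix $m$ and let $n=mp\le x$ lie in $\Acal$ with $p=P^+(n)$. To such an $n$ we attach the record $(a,\kappa)$ defined as follows. First, $a:=\gcd(D_p,m)$, where $D_p=D_p(n)$ is the canonical witness \eqref{eq:canonical-witness}. Second, $\kappa$ is the position of $p$ in the increasing list of distinct prime divisors of $D_p-1$. There are at most $\tau(m)$ choices for $a$. By the argument giving \eqref{eq:kappa-bound}, we have $1\le\kappa\le\omega(D_p-1)\le\lfloor\log_2x\rfloor$, so there are at most $1+\lfloor\log_2x\rfloor$ choices for $\kappa$. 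The claimed bound therefore follows once the record is shown to determine $n$.

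The key step is to recover $D_p$ from $a$. We apply \cref{lem:valuation-bit} with $Q=p$, which is squarefree. It gives $D_p=a\,p^{\epsilon}$ with $\epsilon\in\{0,1\}$. Because $D_p\equiv1\pmod p$, we have $p\nmid D_p$, so $\epsilon=v_p(D_p)=0$ automatically. Hence $D_p=a$ and no bit needs to be recorded. Equivalently, $D_p$ is coprime to $p$, so every prime power in $D_p$ divides $n/p^{v_p(n)}$, which divides $m$; thus $D_p\mid m$ and $a=D_p$.

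With $D_p=a$ known, $p$ is a prime divisor of $a-1$, and $a-1\ge1$ since $D_p>1$. The index $\kappa$ singles out $p$ uniquely among these divisors, and then $n=mp$. So two admissible $n$ with the same record coincide, and the count is at most $\tau(m)(1+\lfloor\log_2x\rfloor)$.

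I do not expect a genuine obstacle here. The only point needing care is that the formula $D_p=a\cdot p^{\epsilon}$ and the vanishing of $\epsilon$ hold whether or not $p^2\mid n$. This is covered by \cref{lem:valuation-bit} together with $p\nmid D_p$. The hypothesis $p=P^+(n)$ is not actually used for injectivity; it only makes the compression map $n\mapsto n/P^+(n)$ well defined.
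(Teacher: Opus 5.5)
Your proof is correct and is the paper's argument: since $p\nmid D_p$ you get $D_p\mid m$, which leaves at most $\tau(m)$ choices for $D_p$, and then $p$ is one of at most $\lfloor\log_2x\rfloor$ distinct prime divisors of $D_p-1<x$. Encoding this as a record $(a,\kappa)$ and passing through the valuation lemma with $Q=p$ only repackages the same count; it is not needed, since $D_p=a$ follows directly from $\gcd(D_p,p)=1$.
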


\begin{proof}
The witness $D_p$ is not divisible by $p$, hence $D_p\mid m$.  Once
$D_p$ is fixed, $p$ is a prime divisor of the positive integer
$D_p-1<x$, which has at most $\lfloor\log_2x\rfloor$ distinct prime
divisors.
\end{proof}

For $n\in\Acal$ with $t=\omega(n)\ge2$, define
\[
 \sigma_1(n):=\log p_1,
 \qquad \sigma_r(n):=h_r\log p_r\quad(2\le r\le t).
\]
Let $\rho(n)$ be the least index at which the maximum score is attained.
If $\rho(n)=1$, put $Q(n)=p_1$; otherwise put
$Q(n)=Q_{\rho(n)}(n)$.  Finally set
\[
 \mathfrak m(n):=\frac{n}{Q(n)}.
\]
These deterministic conventions make $n\mapsto(Q(n),\mathfrak m(n))$
a single-valued map.

For $t\ge2$ define
\begin{equation}\label{eq:Ht-Jt}
 H_t:=\left\lceil\frac{\log(t+2)}{2\log 2}\right\rceil+3,
 \qquad J_t:=2^{H_t}.
\end{equation}
The additive constant $3$ is a harmless safety margin ensuring
$h_r\le H_t$ uniformly for every $r\le t$ and absorbing the one-prime
branch and small values of $t$.  In particular, $J_t\ll\sqrt{t+2}$.

\begin{proposition}[Canonical compression fiber]\label{prop:canonical-fiber}
There is an absolute constant $C_{\mathrm{fib}}$ such that, for every $x\ge3$,
$t\ge2$, and positive integer $m$,
\begin{align*}
 &\#\{n\le x:n\in\Acal,\ \omega(n)=t,\ \mathfrak m(n)=m\}\\
 &\quad\le\tau(m)^{H_t}
 \exp\!\left(C_{\mathrm{fib}}\bigl((\log(t+2))^2+
 \log(t+2)\log\log(3x)\bigr)\right).
\end{align*}
Moreover, $Q(n)$ is squarefree and $\omega(Q(n))\le H_t$.
\end{proposition}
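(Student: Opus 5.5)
The plan is to partition the fiber $\{n\le x:n\in\Acal,\ \omega(n)=t,\ \mathfrak m(n)=m\}$ according to the value of the deterministic index $\rho(n)\in\{1,\ldots,t\}$. On each piece we bound the count by the appropriate earlier fiber estimate, and then we sum over $\rho$.

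The key observation is that when $\rho(n)=\rho$ is fixed, the condition $\mathfrak m(n)=m$ is exactly $m_\rho(n)=m$ for $\rho\ge2$, and exactly $n/P^+(n)=m$ for $\rho=1$.

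\textbf{The case $\rho=1$.} Here \cref{lem:one-prime} gives at most $\tau(m)(1+\lfloor\log_2x\rfloor)$ integers $n$. Since $H_t\ge1$ and $\tau(m)\ge1$, this is at most $\tau(m)^{H_t}\exp(\log(1+\log_2 x))$. The last factor is absorbed into $\exp(C\log(t+2)\log\log(3x))$, because $\log(t+2)\ge\log4$ is bounded below.

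\textbf{The case $\rho\ge2$.} The set of $n$ with $\rho(n)=\rho$ and $m_\rho(n)=m$ is contained in the set of all $n\le x$ in $\Acal$ with $\omega(n)=t$ and $m_\rho(n)=m$. By \cref{prop:fixed-prefix-fiber} its size is at most
\[
\tau(m)^{h_\rho}\exp\!\bigl(C_{\mathrm{fib}}((\log(t+2))^2+\log(t+2)\log\log(3x))\bigr).
\]
We then need $h_\rho\le H_t$. From the definitions,
\[
h_\rho=\lfloor(1+\lfloor\log_2\rho\rfloor)/2\rfloor\le \tfrac12+\tfrac{\log t}{2\log2}\le H_t.
\]
Combined with $\tau(m)\ge1$, this gives $\tau(m)^{h_\rho}\le\tau(m)^{H_t}$.

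\textbf{Summing over $\rho$.} Adding the $t$ pieces costs a factor $t\le\exp(\log(t+2))$. Since $\log(t+2)\le(\log(t+2))^2/\log 4$, this factor is absorbed after enlarging $C_{\mathrm{fib}}$ by an absolute amount. The statement only asserts the existence of an absolute constant, so renaming it is harmless.

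\textbf{The structural claims.} For $\rho=1$, $Q(n)=p_1$ is squarefree with one prime factor, and $1\le H_t$. For $\rho\ge2$, the construction \eqref{eq:Qr-def} gives $Q_\rho(n)$ squarefree with $\omega(Q_\rho(n))=h_\rho\le H_t$.

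The only step needing care is the bookkeeping of constants: the constant in \cref{prop:fixed-prefix-fiber} is the one from \cref{lem:record-count}, so the new constant must be chosen larger than both it and the one-prime contribution. No genuine obstacle is expected, since the reconstruction work was already done in \cref{prop:record-reconstruction}.
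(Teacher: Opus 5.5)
Your proposal is correct and follows essentially the same route as the paper: partition the fiber by $\rho(n)$, use \cref{lem:one-prime} for $\rho=1$ and \cref{prop:fixed-prefix-fiber} together with $h_\rho\le H_t$ for $\rho\ge2$, and absorb the factor $t$ from summing the branches by enlarging $C_{\mathrm{fib}}$. The structural claims also come directly from the construction, exactly as in the paper; your explicit checks of $h_\rho\le H_t$ and of the absorption of $1+\lfloor\log_2x\rfloor$ are the details the paper leaves implicit.
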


\begin{proof}
Partition the fiber according to $r=\rho(n)$.  For $r=1$, use
\cref{lem:one-prime}.  Since $H_t\ge1$ and, after enlarging
$C_{\mathrm{fib}}$,
\[
 1+\lfloor\log_2x\rfloor
 \le\exp\!\bigl(C_{\mathrm{fib}}\log(t+2)\log\log(3x)\bigr),
\]
this branch has the displayed bound.  For $2\le r\le t$, use the
fixed-prefix fiber bound and $h_r\le H_t$.  Summing at most $t$ branches
only adds $\log t$ to the exponent.  Squarefreeness and the bound for
$\omega(Q(n))$ follow from the construction.
\end{proof}

\section{Weighted-prefix extraction}

The canonical score maximizer forces $Q(n)$ to be large.

\begin{lemma}[The reciprocal staircase sum]\label{lem:hr-sum}
As $t\to\infty$,
\[
 \sum_{r=2}^t\frac1{h_r}
 =\left(2\log 2+o(1)\right)\frac t{\log t}.
\]
\end{lemma}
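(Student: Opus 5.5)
We need $\sum_{r=2}^t 1/h_r$ with $h_r=\lfloor s_r/2\rfloor$ and $s_r=1+\lfloor\log_2 r\rfloor$. The plan is to group $r$ into dyadic blocks on which $h_r$ is constant, and observe that the sum is dominated by the top block.

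First, $s_r=k+1$ exactly for $2^k\le r<2^{k+1}$. Hence $h_r=H$ exactly when $k+1\in\{2H,2H+1\}$, that is, for
\[
 r\in[2^{2H-1},2^{2H+1}).
\]
This block has at most $3\cdot 2^{2H-1}$ elements. For $r\ge2$ we have $h_r\ge1$, so every term is defined.

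Put $K:=\lfloor\log_2 t\rfloor$ and $H^*:=h_t=\lfloor (K+1)/2\rfloor$. We split the sum into the terms with $h_r=H^*$ and the rest.

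\emph{Lower-order part.} The terms with $h_r\le H^*-1$ all have $r<2^{2H^*-1}$. From $2H^*\le K+1$ this gives $r<2^{K}\le t$. On the other hand, $2H^*\ge K$ gives $2^{2H^*-1}\ge t/4$. So the range $r\ge t/4$ is at least partly inside the top block. The lower-order terms contribute at most
\[
 \sum_{H<H^*}\frac{3\cdot 2^{2H-1}}{H}.
\]
This is a geometric-type sum, dominated by its last term, and is $O(4^{H^*}/H^*)=O(t/\log t)$. That is only the same order as the target, not smaller, so a cruder split will not do.

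\emph{Exact evaluation.} Instead I will evaluate the sum directly. It equals
\[
 \sum_{H}\frac{\#\{2\le r\le t:h_r=H\}}{H}.
\]
Each block with $H<H^*$ contributes exactly $3\cdot 2^{2H-1}/H$. The top block contributes $(t-2^{2H^*-1}+1)/H^*$. Since $H^*=\frac{\log t}{2\log 2}+O(1)$, the top block gives
\[
 \frac{(2\log 2+o(1))\,(t-2^{2H^*-1})}{\log t}.
\]
For the lower blocks, the sum $\sum_{H<H^*}3\cdot2^{2H-1}/H$ equals $(1+o(1))\,2^{2H^*-1}/H^*$. The reason is that the ratio of consecutive terms tends to $4$, so the sum is $(3/2)\cdot 4^{H^*-1}\cdot\frac{4}{3}\cdot(1+o(1))/H^*$, which is $2^{2H^*-1}(1+o(1))/H^*$. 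This holds because $\sum_{j\ge0}4^{-j}\cdot\frac{H^*}{H^*-j}$ is $\frac43(1+o(1))$ by dominated convergence.

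Adding the two pieces gives
\[
 \frac{(1+o(1))\,t}{H^*}=(2\log 2+o(1))\frac{t}{\log t}.
\]
This is uniform in where $t$ sits relative to the powers of $4$.

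The main obstacle is exactly this bookkeeping. A naive argument that "most $r$ are near $t$, so $h_r\approx \log t/(2\log 2)$" is actually fine in spirit: for $r\ge t/\log t$ we have $h_r=\frac{\log t}{2\log2}+O(\log\log t)$, so these terms give $(2\log2+o(1))t/\log t$. The remaining $r<t/\log t$ contribute at most $t/\log t\cdot 1=o(t)$, but that alone is not $o(t/\log t)$. So I would use the cleaner cutoff $r\le t/(\log t)^2$, whose trivial contribution is $o(t/\log t)$. For $r\in(t/(\log t)^2,t]$ we have $h_r=\frac{\log t}{2\log 2}(1+o(1))$, which yields the claim directly.
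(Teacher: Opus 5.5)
Your proposal is correct. Both completed arguments in it prove the lemma: the exact dyadic evaluation, and the closing cutoff argument. The exploratory ``lower-order part'' paragraph can be dropped.

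The paper argues differently. It takes $h_r=\frac{\log r}{2\log 2}+O(1)$ together with $h_r\ge1$, writes $1/h_r=2\log2/\log r+O((\log r)^{-2})$ uniformly for $r\ge3$, and then quotes the integral comparison $\sum_{3\le r\le t}1/\log r=t/\log t+O(t/(\log t)^2)$. This is the shortest route once the $\operatorname{li}$-type asymptotic is granted, and it yields an explicit error $O(t/(\log t)^2)$.

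Your exact evaluation instead uses the block structure: for $H\ge1$, $h_r=H$ exactly when $r\in[2^{2H-1},2^{2H+1})$. It then needs only a geometric-series tail, so it is fully self-contained. It also shows that essentially all the mass comes from the last few blocks, uniformly in where $t$ sits relative to powers of $4$. For the dominated-convergence step, state the majorant: $H^*/(H^*-1-j)\le 2j+2$ on the relevant range, so $4^{-j}(2j+2)$ dominates. Your index $H^*-j$ should be $H^*-1-j$, but the slip is harmless.

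Your cutoff argument is the most elementary of the three routes. It uses only $1/h_r\le1$ for $r\le t/(\log t)^2$ and $h_r=(1+o(1))\frac{\log t}{2\log 2}$ uniformly on $(t/(\log t)^2,t]$. The price is a weaker error $O(t\log\log t/(\log t)^2)$, which does not matter for an $o(1)$ statement.
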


\begin{proof}
By \eqref{eq:hr-asymp}, uniformly for $r\ge3$,
\[
 \frac1{h_r}=\frac{2\log 2}{\log r}+O\!\left(\frac1{(\log r)^2}\right).
\]
The $r=2$ term contributes $O(1)$.  The standard integral comparison
gives
\[
 \sum_{3\le r\le t}\frac1{\log r}
 =\frac t{\log t}+O\!\left(\frac t{(\log t)^2}\right),
 \qquad
 \sum_{3\le r\le t}\frac1{(\log r)^2}
 =O\!\left(\frac t{(\log t)^2}\right),
\]
which proves the assertion.
\end{proof}

We use $o_t(1)$ for a quantity tending to zero as $t\to\infty$, uniformly
over all integers $n$ with $\omega(n)=t$.

\begin{proposition}[Large canonical compression divisor]\label{prop:weighted-prefix}
Let $n\in\Acal$, let $p_1>\cdots>p_t$ be its distinct prime factors, and put
\[
 W:=\log\rad(n)=\sum_{j=1}^t\log p_j.
\]
For the canonical divisor $Q(n)$ defined above,
\begin{equation}\label{eq:weighted-Q}
 \log Q(n)\ge
 \left(\frac1{2\log 2}-o_t(1)\right)
 W\frac{\log t}{t}.
\end{equation}
Equivalently, for every $\eta>0$ there exists $T_\eta$ such that, whenever $t\ge T_\eta$,
\begin{equation}\label{eq:weighted-Q-explicit}
 \log Q(n)\ge
 \frac{1-\eta}{2\log 2}
 W\frac{\log t}{t}.
\end{equation}
\end{proposition}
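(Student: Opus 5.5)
Since $\rho(n)$ maximizes the score $\sigma_r(n)$, we have $\log Q(n)\ge\sigma_{\rho(n)}(n)$. Indeed, if $\rho(n)=1$ then $\log Q(n)=\log p_1=\sigma_1(n)$, and otherwise $\log Q(n)\ge h_r\log p_r=\sigma_r(n)$ by \eqref{eq:Qr-lower}. Write $\Sigma:=\max_{1\le r\le t}\sigma_r(n)$. It therefore suffices to show
\[
 \Sigma\ge\Bigl(\frac1{2\log 2}-o_t(1)\Bigr)W\frac{\log t}{t}.
\]
I would get this by a weighted averaging (harmonic-mean) argument. For every $r\ge2$ we have $\log p_r\le\Sigma/h_r$, and $\log p_1\le\Sigma$. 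Summing over $r$ gives
\[
 W=\sum_{r=1}^t\log p_r\le\Sigma\Bigl(1+\sum_{r=2}^t\frac1{h_r}\Bigr).
\]

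Next I would apply \cref{lem:hr-sum}. The bracket equals $(2\log 2+o(1))\,t/\log t$, since the extra $1$ is absorbed into the $o(1)$. Hence
\[
 \Sigma\ge\frac{W}{(2\log 2+o_t(1))\,t/\log t},
\]
which is \eqref{eq:weighted-Q}. The $o_t(1)$ comes only from \cref{lem:hr-sum}, which depends on $t$ alone, so it is uniform over $n$ with $\omega(n)=t$ as required. The explicit form \eqref{eq:weighted-Q-explicit} then follows by choosing $T_\eta$ so that the relative error is below $\eta$.

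There is no real obstacle here. The only points to check are that \eqref{eq:Qr-lower} is valid for every $2\le r\le t$ (because $q_j\ge p_r$ for every selected prime, all of which lie in $\{p_1,\dots,p_r\}$, and $|E|=h_r$), and that the $r=1$ branch is handled by the convention $Q(n)=p_1$. A small edge case is $h_2=1$, which is positive, so no division by zero arises. Small $t$ is irrelevant since the statement is asymptotic.
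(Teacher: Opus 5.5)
Your proposal is correct and follows the paper's proof essentially verbatim. Both bound $W\le M\bigl(1+\sum_{r=2}^t 1/h_r\bigr)$ via the maximal score, apply \cref{lem:hr-sum}, and transfer the bound to $\log Q(n)$ at the maximizing index $\rho(n)$ using \eqref{eq:Qr-lower}. Your extra checks ($|E|=h_r$, all selected primes at least $p_r$, $h_r\ge1$) are exactly what is needed.
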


\begin{proof}
Let
\[
 M:=\max\left\{\log p_1,\max_{2\le r\le t}h_r\log p_r\right\}.
\]
Then $\log p_1\le M$ and $\log p_r\le M/h_r$ for $r\ge2$.  By \cref{lem:hr-sum},
\[
 W\le M\left(1+\sum_{r=2}^t\frac1{h_r}\right)
 =\left(2\log 2+o(1)\right)M\frac t{\log t}.
\]
Hence
\[
 M\ge\left(\frac1{2\log 2}-o(1)\right)W\frac{\log t}{t}.
\]
The canonical index $\rho(n)$ attains the maximum.  If $\rho(n)=1$, then $\log Q(n)=M$.  If $\rho(n)=r\ge2$, then \eqref{eq:Qr-lower} gives $\log Q(n)\ge h_r\log p_r=M$.  This proves both formulations.  In particular, the compression removes
only $O(\log t)$ distinct prime factors, while its logarithm is of order
$W\log t/t$.
\end{proof}

\section{The upper bound}

Put
\begin{equation}\label{eq:L-notation}
 \begin{gathered}
 L:=\log x,\qquad L_2:=\log L,\qquad L_3:=\log L_2,\\
 S:=\sqrt L\,L_2,\qquad c_0:=\frac1{2\sqrt{\log 2}}.
 \end{gathered}
\end{equation}

The limiting parameters in this section are chosen in the following order.
In the proof of the upper bound we first fix the final error tolerance
$\eps>0$, then choose $\eta$, then the threshold $T_\eta$ supplied by
\cref{prop:weighted-prefix}, then the auxiliary cutoffs $\delta,C,T$, and
finally let $x$ tend to infinity.  All $o(1)$-terms below are uniform after
the preceding parameters have been fixed.

\subsection{Regular integers and the compression count}

We first remove integers far below $x$ or with a large radical defect.  Fix an absolute constant $K_0$ with $K_0/2>c_0+1$.  By \cref{lem:radical-defect}, the number of integers $n\le x$ satisfying either
\[
 n\le x\e^{-K_0S}
 \quad\text{or}\quad
 \frac n{\rad(n)}>\e^{K_0S}
\]
is
\begin{equation}\label{eq:irregular}
 O(x\e^{-K_0S/2}).
\end{equation}
Call the remaining integers \emph{regular}.  For a regular integer,
\begin{equation}\label{eq:regular-radical}
 W:=\log\rad(n)\ge L-2K_0S=(1-o(1))L.
\end{equation}

Fix $\eta\in(0,1/4)$.  Choose $T_\eta$ as in \eqref{eq:weighted-Q-explicit}.  For $x$ sufficiently large in terms of $\eta$, regularity gives $W\ge(1-\eta)L$.  Hence every regular $n\in\Acal$ with $t=\omega(n)\ge T_\eta$ satisfies
\begin{equation}\label{eq:Mt-def}
 \log Q(n)\ge M_{t,\eta}:=
 \frac{(1-\eta)^2L\log t}{2t\log 2}.
\end{equation}
Since $Q(n)$ has at most $H_t$ distinct prime factors,
\[
 \omega(\mathfrak m(n))\ge t-H_t.
\]
The canonical fiber bound therefore gives the uniform counting inequality
\begin{equation}\label{eq:compression-count}
 \begin{split}
 N_t^{\mathrm{reg}}(x)
 &:=\#\{n\le x:n\in\Acal,\ n\text{ regular},\ \omega(n)=t\}\\
 &\le E(x,t)
 \sum_{\substack{m\le x\e^{-M_{t,\eta}}\\
                   \omega(m)\ge t-H_t}}
 \tau(m)^{H_t},
 \end{split}
\end{equation}
where we define
\begin{equation}\label{eq:E-small}
 E(x,t):=\exp\!\left(C_{\mathrm{fib}}\bigl((\log(t+2))^2+
 \log(t+2)\log\log(3x)\bigr)\right).
\end{equation}
\subsection{The critical range}

We record the two uniform critical estimates in a quantifier-explicit form.

\begin{lemma}[Uniform critical rates]\label{lem:critical-rates}
Fix $0<\delta<C<\infty$, $\eta\in(0,1/4)$, and $\zeta>0$.  There exists $x_0=x_0(\delta,C,\eta,\zeta)$ such that, for every $x\ge x_0$ and every integer $t$ satisfying
\[
 \delta\sqrt L\le t\le C\sqrt L,
 \qquad
 \lambda:=\frac t{\sqrt L},
\]
one has
\begin{align}
 \#\{n\le x:\omega(n)\ge t\}
 &\le x\exp\!\left(-\left(\frac\lambda2-\zeta\right)S\right),
 \label{eq:ordinary-rate}\\
 N_t^{\mathrm{reg}}(x)
 &\le x\exp\!\left(-\left(\frac\lambda4+
 \frac{(1-\eta)^2}{4\lambda\log 2}-\zeta\right)S\right).
 \label{eq:compression-rate}
\end{align}
\end{lemma}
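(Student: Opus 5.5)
The plan is to derive both estimates directly from the tail bounds of Section~2. Every secondary term will be shown to be $O(\sqrt L\,L_3)$ or smaller, uniformly for $\lambda\in[\delta,C]$. Since $\sqrt L\,L_3=o(S)$, such terms are absorbed into $\zeta S$ once $x\ge x_0(\delta,C,\eta,\zeta)$. Throughout I use
\[
 \log t=\tfrac12L_2+\log\lambda,
 \qquad |\log\lambda|\le\max\{|\log\delta|,|\log C|\}=O_{\delta,C}(1).
\]

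For \eqref{eq:ordinary-rate} I apply \cref{cor:omega-tail} with $X=x$ and $u=t$. Here $\ell_x=\log(1+L)=L_2+o(1)$, so $u>\ell_x$ for large $x$. The exponent is
\[
 -t\log\frac{t}{\ell_x}+t
 =-t\bigl(\tfrac12L_2+\log\lambda-L_3+o(1)-1\bigr)
 =-\frac\lambda2 S+O_{\delta,C}\bigl(\sqrt L\,L_3\bigr),
\]
which gives the first estimate.

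For \eqref{eq:compression-rate} I start from \eqref{eq:compression-count}. Since $t\ge\delta\sqrt L$, we have $t\ge T_\eta$ for large $x$, so \eqref{eq:Mt-def} applies. The steps are as follows.
\begin{enumerate}[label=\textnormal{(\roman*)},leftmargin=2.5em]
\item \emph{Size of the removed factor.} Expanding $M_{t,\eta}$ gives
\[
 M_{t,\eta}=\frac{(1-\eta)^2}{4\lambda\log 2}S+O_{\delta,C}(\sqrt L).
\]
In particular $M_{t,\eta}=o(L)$, so $X:=x\e^{-M_{t,\eta}}\ge3$ and $\ell_X\le\log(1+L)$.
\item \emph{Applying the moment bound.} I apply \cref{lem:restricted-moment} with $H=H_t$, $u=t-H_t$ and this $X$.
\item \emph{Checking $u>B$.} From \eqref{eq:Ht-Jt}, $J_t\le 2^{4}\cdot 2\sqrt{t+2}\ll\sqrt t$. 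Hence $B=J_t\ell_X\ll \sqrt t\,L_2\ll L^{1/4}L_2$, while $u=t-O(\log t)\asymp\sqrt L$. So $u>B$ holds for large $x$.
\item \emph{The key saving.} One has
\[
 \log\frac uB\ge\tfrac12\log t-L_3-O(1)=\tfrac14L_2-O_{\delta,C}(L_3).
\]
Therefore
\[
 u\log\frac uB-u\ge\frac\lambda4S-O_{\delta,C}\bigl(\sqrt L\,L_3\bigr),
\]
where the loss from replacing $t$ by $t-H_t$ is only $O(L_2^2)$.
\item \emph{Assembling.} The sum in \eqref{eq:compression-count} is at most $x\exp\bigl(-M_{t,\eta}-\tfrac\lambda4S+O(\sqrt L\,L_3)\bigr)$. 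The prefactor satisfies $E(x,t)=\exp\bigl(O(L_2^2)\bigr)$, since $\log(t+2)\ll L_2$ and $\log\log(3x)\ll L_2$. Combining these gives \eqref{eq:compression-rate}.
\end{enumerate}

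The main obstacle is the uniform bookkeeping in step~(iv). The factor $\tfrac14$ in the rate comes entirely from $\log(u/B)\approx\tfrac12\log t\approx\tfrac14L_2$. This relies on the growing moment index $J_t$ being only $O(\sqrt t)$, which is exactly why $H_t$ was defined with $\log(t+2)/(2\log 2)$. I will check carefully that the constant $2^{4}$ hidden in $J_t$, the factor $\ell_X\approx L_2$, and the $\log\lambda$ terms all contribute only $O(L_3)$ to $\log(u/B)$, uniformly over $\lambda\in[\delta,C]$. Once this is checked, every error term is $o(S)$ with constants depending only on $\delta,C,\eta$, and $x_0$ can be chosen accordingly.
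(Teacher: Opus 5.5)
Your proposal is correct and follows essentially the same route as the paper. You use \cref{cor:omega-tail} with $X=x$, $u=t$ for \eqref{eq:ordinary-rate}, and \cref{lem:restricted-moment} with $H=H_t$, $u=t-H_t$, $X=x\e^{-M_{t,\eta}}$ for \eqref{eq:compression-rate}, together with the expansion of $M_{t,\eta}$ and $\log E(x,t)=O(L_2^2)$. The only difference is cosmetic: you use one-sided bounds such as $J_t\ll\sqrt{t+2}$ and $\ell_X\le\log(1+L)$, and you track the errors explicitly as $O_{\delta,C}(\sqrt L\,L_3)$, whereas the paper writes two-sided asymptotics for the same quantities.
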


\begin{proof}
Enlarge $x_0$, if necessary, so that $\delta\sqrt L\ge T_\eta$ for every $x\ge x_0$.  Then every $t$ in the stated interval is large enough for the compression lower bound \eqref{eq:Mt-def}.
Apply \cref{cor:omega-tail} with $X=x$ and $u=t$.  Uniformly for $\lambda\in[\delta,C]$,
\[
 \log\frac{t}{\log(1+L)}
 =\frac12L_2-L_3+O_{\delta,C}(1).
\]
The $t$, $\log(1+L)$, and $O(L_3)$ terms are $o(S)$ uniformly, which proves \eqref{eq:ordinary-rate} once $x$ is large enough.

For \eqref{eq:compression-rate}, put
\[
 X:=x\e^{-M_{t,\eta}},
 \qquad
 u:=t-H_t.
\]
In the present range $M_{t,\eta}=O(S)=o(L)$, so $X\ge3$ for large $x$.  Uniformly for $\lambda\in[\delta,C]$, the definitions give
\[
\begin{aligned}
 H_t&=\frac{\log t}{2\log 2}+O(1),\\
 J_t&=t^{1/2}\e^{O(1)}=L^{1/4}\e^{O_{\delta,C}(1)},\\
 u&=t+O(\log t),\qquad \log X=L+O(S),\\
 \log\frac{t}{\log(1+L)}
 &=\frac12L_2-L_3+O_{\delta,C}(1),\\
 \log\frac{u}{J_t\log(1+\log X)}
 &=\frac14L_2-L_3+O_{\delta,C}(1).
\end{aligned}
\]
Moreover
\[
 \sqrt L\,L_3=o(S),\qquad L_2^2=o(S),\qquad J_t L_2=o(t)
\]
uniformly in the same range.  Hence $J_t\log(1+\log X)=o(t)$ and $u=t-H_t>J_t\log(1+\log X)$ for all sufficiently large $x$.  Hence \cref{lem:restricted-moment} applies.  Write
$\ell_X:=\log(1+\log X)$.  Its exponent is
\begin{align*}
 -u\log\frac{u}{J_t\ell_X}+u
 &=-\bigl(\lambda\sqrt L+O(L_2)\bigr)
 \left(\frac14L_2-L_3+O_{\delta,C}(1)\right)\\
 &\qquad+\lambda\sqrt L+O(L_2)\\
 &=-\left(\frac\lambda4+o_{\delta,C}(1)\right)S.
\end{align*}
Furthermore,
\begin{align*}
 M_{t,\eta}
 &=\frac{(1-\eta)^2L\log t}{2t\log 2}\\
 &=\left(\frac{(1-\eta)^2}{4\lambda\log 2}
 +o_{\delta,C}(1)\right)S.
\end{align*}
Finally, $\log E(x,t)=O(L_2^2)=o(S)$ uniformly.  Combining these three
displays and enlarging $x_0$ makes the total normalized error smaller
than $\zeta$.
\end{proof}

\subsection{The noncritical ranges}

\begin{lemma}[Noncritical values of $\omega$]\label{lem:noncritical}
Fix $\eps>0$ and $\eta\in(0,1/4)$.  There exist $0<\delta<1<C$, an integer $T\ge T_\eta$, and $x_1=x_1(\eps,\eta,\delta,C,T)$ such that, for $x\ge x_1$, the number of regular $n\le x$ in $\Acal$ satisfying either
\[
 \omega(n)<\delta\sqrt L
 \quad\text{or}\quad
 \omega(n)>C\sqrt L
\]
is at most
\[
 x\exp(-(c_0+\eps)S).
\]
\end{lemma}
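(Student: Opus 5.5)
The argument splits the noncritical values of $t=\omega(n)$ into three ranges: large $t$, moderately small $t$, and bounded $t$. The parameters are chosen in the order $C$, then $\delta$, then $T$, then $x_1$. In each range I bound the count by $x\exp(-(c_0+\eps+1)S)$ times a factor at most $\sqrt L$, which is $\exp(o(S))$; this is enough for all large $x$.

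\emph{Large $\omega$.} The range $t>C\sqrt L$ needs no compression. I apply \cref{cor:omega-tail} with $X=x$ and $u=C\sqrt L$; the hypothesis $u>\ell_x$ holds for large $x$. As in the proof of \eqref{eq:ordinary-rate}, $\log(u/\ell_x)=\tfrac12L_2-L_3+O_C(1)$. The exponent is therefore $-(C/2+o(1))S$. I fix $C>2(c_0+\eps+1)$, and this gives the required bound with room to spare.

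\emph{Moderately small $\omega$.} Here $T\le t<\delta\sqrt L$ with $T\ge T_\eta$. I use \eqref{eq:compression-count}. Instead of \cref{lem:restricted-moment}, I drop the restriction on $\omega(m)$ and combine $\tau(m)^{H_t}\le d_{J_t}(m)$ from \cref{lem:local-moment} with \cref{lem:dk-sum}. With $X=x\e^{-M_{t,\eta}}$ this gives
\[
 N_t^{\mathrm{reg}}(x)\le E(x,t)\,x\,\e^{-M_{t,\eta}}(1+L)^{J_t}.
\]
Since $J_t\ll\sqrt t\le\delta^{1/2}L^{1/4}$, the loss $J_t\log(1+L)$ is $O(L^{1/4}L_2)=o(S)$, and $\log E(x,t)=O(L_2^2)=o(S)$. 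For the gain, $(\log t)/t$ is decreasing for $t\ge3$. Hence for $t<\delta\sqrt L$,
\[
 M_{t,\eta}\ge\frac{(1-\eta)^2L\log(\delta\sqrt L)}{2\delta\sqrt L\log 2}
 =\left(\frac{(1-\eta)^2}{4\delta\log 2}+o_\delta(1)\right)S.
\]
Using $\eta<1/4$, I choose $\delta$ so small that $\frac{(1-\eta)^2}{4\delta\log 2}>c_0+\eps+1$. Summing over fewer than $\sqrt L$ values of $t$ costs only $\exp(o(S))$.

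\emph{Bounded $\omega$.} For $t<T$ I use the one-prime compression. The cases $t\le1$ are trivial: if $n=p^k$ with $k\ge1$, every divisor $d>1$ of $n$ is divisible by $p$, so $n\notin\Acal$; thus only $n=1$ occurs. For $2\le t<T$ and regular $n$, \eqref{eq:regular-radical} gives $\log P^+(n)\ge W/t\ge L/(2T)$. Hence $m=n/P^+(n)\le x\e^{-L/(2T)}$. By \cref{lem:one-prime} and $\sum_{m\le X}\tau(m)\le X(1+\log X)$, the number of such $n$ is at most
\[
 x\,\e^{-L/(2T)}(1+L)(1+\log_2x)=x\exp(-(1/(2T)+o(1))L).
\]
This is far smaller than $x\exp(-(c_0+\eps)S)$. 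Finally I take $T=\max(T_\eta,3)$, and $x_1$ large enough to absorb every $o(1)$ term and to ensure $\delta\sqrt L\ge T$.

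The step I expect to be delicate is the moderately small range. There $M_{t,\eta}$ must be controlled uniformly down to $t=T$, not only at the endpoint $t\approx\delta\sqrt L$. The monotonicity of $(\log t)/t$ handles this, and for $t$ much smaller than $\sqrt L$ the gain is only larger. I also need the loss $J_t\log(1+L)$ to stay $o(S)$ uniformly in $t$, which holds because $J_t$ grows only like $\sqrt t$.
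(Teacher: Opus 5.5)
Your proposal is correct and follows essentially the same route as the paper. You use \cref{cor:omega-tail} for $\omega(n)>C\sqrt L$, the compression count with the $\omega(m)$-restriction dropped together with \cref{lem:local-moment} and \cref{lem:dk-sum} (and the monotonicity of $(\log t)/t$) for $T\le t<\delta\sqrt L$, and the one-prime compression for $t<T$. The paper additionally enlarges $T$ to force $M_{t,\eta}<L/2$, but your choice $T=\max(T_\eta,3)$ already gives this, since $(\log t)/t\le(\log 3)/3$ for $t\ge3$ yields $M_{t,\eta}<L/2$; so the difference is only cosmetic.
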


\begin{proof}
Choose $C$ so large that $C/2>c_0+3\eps$, and put
$u_C:=\lfloor C\sqrt L\rfloor+1$.  Every integer with
$\omega(n)>C\sqrt L$ has $\omega(n)\ge u_C$.  Applying
\cref{cor:omega-tail} with $u=u_C$ gives the required estimate for this
range.

Choose $\delta>0$ so small that
\begin{equation}\label{eq:delta-choice}
 \frac{(1-\eta)^2}{4\delta\log 2}>c_0+3\eps.
\end{equation}
Now choose $T\ge\max\{T_\eta,3\}$ large enough that \eqref{eq:weighted-Q-explicit} is available for every $t\ge T$ and
\[
 \frac{(1-\eta)^2\log t}{2t\log 2}<\frac12
 \qquad(t\ge T).
\]
Thus $M_{t,\eta}<L/2$ throughout the following range, so every divisor-sum parameter is at least one.  If $T\le t\le\delta\sqrt L$, then $\log t/t$ is decreasing and \eqref{eq:Mt-def} gives, uniformly,
\[
 M_{t,\eta}
 \ge\left(\frac{(1-\eta)^2}{4\delta\log 2}+o(1)\right)S.
\]
Dropping the condition on $\omega(m)$ in \eqref{eq:compression-count} and using \cref{lem:local-moment} and \cref{lem:dk-sum},
\[
 \sum_{m\le x\e^{-M_{t,\eta}}}\tau(m)^{H_t}
 \le x\e^{-M_{t,\eta}}(1+L)^{J_t-1}.
\]
Uniformly for $t\le\delta\sqrt L$,
\[
 J_t L_2\ll L^{1/4}L_2=o(S),
\]
and \eqref{eq:E-small} is also $o(S)$.  By \eqref{eq:delta-choice}, each of these terms has the required bound.
There are at most $O(\sqrt L)$ admissible values of $t$, and their total
cost is $\exp(O(\log L))=\exp(o(S))$.

It remains to treat $0\le t<T$.  The case $t=0$ consists only of $n=1$.  If $t=1$ and $n>1$, then $n$ is a prime power and every divisor $d>1$ is divisible by its unique prime factor, so $n\notin\Acal$.  Fix $2\le t<T$.  From regularity,
\[
 p_1\ge\rad(n)^{1/t}
 \ge\exp\!\left(\frac{L-2K_0S}{t}\right).
\]
Apply \cref{lem:one-prime}.  With
\[
 X_t:=x\exp\!\left(-\frac{L-2K_0S}{t}\right),
\]
we obtain, by \cref{lem:dk-sum} with $k=2$,
\[
 N_t^{\mathrm{reg}}(x)
 \le (1+\lfloor\log_2x\rfloor)
 \sum_{m\le X_t}\tau(m)
 \le (1+\lfloor\log_2x\rfloor)X_t(1+\log X_t).
\]
Thus the number of such $n$ is $x^{1-1/t+o(1)}$, uniformly for each of the finitely many $t<T$, which is much smaller than $x\exp(-(c_0+\eps)S)$.
\end{proof}

\subsection{Optimization}

\begin{theorem}[Upper bound]\label{thm:upper}
As $x\to\infty$,
\[
 A(x)\le x\exp\!\left(-\left(\frac1{2\sqrt{\log 2}}+o(1)\right)
 \sqrt{\log x}\log\log x\right).
\]
\end{theorem}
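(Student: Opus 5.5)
We assemble the upper bound from the pieces already in place, with the limiting parameters chosen in the order announced at the start of the section. Fix $\eps>0$ and then $\eta\in(0,1/4)$ so small that
\[
 \inf_{\lambda>0}\max\left\{\frac\lambda2,\ \frac\lambda4+\frac{(1-\eta)^2}{4\lambda\log 2}\right\}\ge c_0-\eps .
\]
This is possible because the left side is continuous in $\eta$ and equals $c_0$ at $\eta=0$ by \eqref{eq:intro-optimization}. Explicitly, it equals $\min\{(1-\eta)/(2\sqrt{\log 2}),\dots\}$. Next take $\delta,C,T$ from \cref{lem:noncritical}, and then $\zeta=\eps$ in \cref{lem:critical-rates}.

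Split the integers $n\le x$ in $\Acal$ into three groups.

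\textbf{Irregular integers.} By \eqref{eq:irregular} these number $O(x\e^{-K_0S/2})$. Since $K_0/2>c_0+1$, this is at most $x\e^{-(c_0+1)S}$.

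\textbf{Regular integers with $\omega(n)$ outside $[\delta\sqrt L,C\sqrt L]$.} These are handled by \cref{lem:noncritical}, giving at most $x\e^{-(c_0+\eps)S}$.

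\textbf{Regular integers with $\delta\sqrt L\le t\le C\sqrt L$.} For each such $t$ there are two estimates. Trivially $N_t^{\mathrm{reg}}(x)\le\#\{n\le x:\omega(n)\ge t\}$, which is controlled by \eqref{eq:ordinary-rate}. Alternatively, \eqref{eq:compression-rate} applies. Taking the smaller of the two bounds, with $\lambda=t/\sqrt L$, gives
\[
 N_t^{\mathrm{reg}}(x)\le x\exp\!\Bigl(-\bigl(\max\{\tfrac\lambda2,\tfrac\lambda4+\tfrac{(1-\eta)^2}{4\lambda\log2}\}-\zeta\bigr)S\Bigr)\le x\e^{-(c_0-2\eps)S}.
\]
Summing over the $O(\sqrt L)$ values of $t$ costs only a factor $\exp(O(L_2))=\exp(o(S))$.

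Adding the three groups gives $A(x)\le x\exp(-(c_0-3\eps+o(1))S)$. Since $\eps$ is arbitrary, this is the claimed upper bound.

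The only genuinely delicate point is uniformity. \Cref{lem:critical-rates} is stated with a threshold $x_0$ valid uniformly over all $t$ in the critical window, so the maximum over the window is legitimate. The elementary optimization must also be checked for every $\lambda$ in the window, not only near $\lambda=1/\sqrt{\log 2}$. For this, define $g(\lambda)=\lambda/4+a/(4\lambda)$ with $a=(1-\eta)^2/\log 2$. The function $\lambda/2$ exceeds $g$ exactly when $\lambda>\sqrt a$, and at the crossing point both equal $\sqrt a/2$. So the infimum of the maximum is $\sqrt a/2=(1-\eta)c_0$, and choosing $\eta\le\eps/c_0$ suffices.

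Finally, convert to the stated form. Since $S=\sqrt{\log x}\log\log x$, the bound $A(x)\le x\exp(-(c_0+o(1))S)$ is exactly the statement of \cref{thm:upper}.
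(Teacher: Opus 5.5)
Your proposal is correct and follows essentially the same route as the paper's proof. You split into irregular, noncritical and critical-window integers, take the better of \eqref{eq:ordinary-rate} and \eqref{eq:compression-rate} for each $t$ in the window, and use the fact that the infimum of the maximum of the two rates is $(1-\eta)c_0$. The only differences are in bookkeeping: you take $\zeta=\eps$ and end with $c_0-3\eps$, while the paper uses $\zeta=\eps/8$ and error parameter $\eps/2$ in \cref{lem:noncritical} to land exactly on $c_0-\eps$.
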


\begin{proof}
Fix $\eps>0$.  Choose $\eta\in(0,1/4)$ so small that
\[
 (1-\eta)c_0\ge c_0-\frac{\eps}{4}.
\]
Apply \cref{lem:noncritical} with error parameter $\eps/2$ and this fixed
$\eta$; let $\delta,C,T$ and $x_1$ be the resulting constants.  Next
apply \cref{lem:critical-rates} with these fixed $\delta,C,\eta$ and with
$\zeta=\eps/8$, obtaining a threshold $x_0$.  We henceforth assume that
$x\ge\max\{x_0,x_1\}$ and is large enough that
\[
 \log(C\sqrt L+1)\le\frac{\eps}{8}S.
\]

For $\lambda>0$, put
\[
 F_\eta(\lambda):=\max\left\{
 \frac\lambda2,
 \frac\lambda4+\frac{(1-\eta)^2}{4\lambda\log 2}
 \right\}.
\]
The two expressions are equal at
$\lambda_0=(1-\eta)/\sqrt{\log 2}$.  The second expression is decreasing
on $(0,\lambda_0]$, while the first is increasing on
$[\lambda_0,\infty)$.  Therefore
\begin{equation}\label{eq:Feta-min}
 \inf_{\lambda>0}F_\eta(\lambda)
 =\frac{1-\eta}{2\sqrt{\log 2}}
 =(1-\eta)c_0.
\end{equation}
For each integer $t$ in the critical interval, take the better of
\eqref{eq:ordinary-rate} and \eqref{eq:compression-rate}.  Its normalized
exponential rate is at least
\[
 F_\eta(t/\sqrt L)-\zeta
 \ge c_0-\frac{\eps}{4}-\frac{\eps}{8}.
\]
There are at most $C\sqrt L+1$ such values of $t$, so their union has size
at most
\[
 x\exp\!\left(-\left(c_0-\frac{\eps}{2}\right)S\right).
\]
The noncritical regular integers are bounded by
$x\exp(-(c_0+\eps/2)S)$ by \cref{lem:noncritical}; the irregular integers
are $O(x\exp(-K_0S/2))$, and $K_0/2>c_0+1$.  Thus, after increasing the
threshold for $x$ once more,
\[
 A(x)\le x\exp\bigl(-(c_0-\eps)S\bigr).
\]
Since $\eps>0$ was arbitrary, the upper bound follows.  Together with
\cref{thm:lower}, it proves \cref{thm:main}.
\end{proof}

\begin{remark}
The two directions use different manifestations of the same binary
entropy.  The construction has $2^{r-1}$ possible subset products for a
target prime; the upper bound repeatedly halves a witness-incidence set
and then deletes half of the resulting homogeneous sequence.
\end{remark}

\section*{Acknowledgements}
The author thanks the authors and contributors of the PrimeNumberTheoremAnd
project, whose \texttt{MediumPNT} theorem supplies the analytic input of the
formalisation, the Mathlib community, and Thomas Bloom for creating and
maintaining \href{https://www.erdosproblems.com}{erdosproblems.com}.

\end{document}